\numberwithin{equation}{section}
\newtheorem{theorem}{Theorem}[section]
\newtheorem{corollary}[theorem]{Corollary}
\newtheorem{lemma}{Lemma}[section]
\newtheorem{remark}{Remark}[section]
\newtheorem{definition}{Definition}[section]
\begin{document}

\title[Global strong solutions to  planar non-resistive  MHD equations ]
{Global strong solutions to the Cauchy problem of the planar non-resistive magnetohydrodynamic equations with large initial data}

\author{Jinkai Li}
\address{Jinkai Li, School of Mathematical Sciences, South China Normal University, Zhong Shan Avenue West 55, Guangzhou 510631, P. R. China}
\email{jklimath@m.scnu.edu.cn; jklimath@gmail.com}

\author{Mingjie Li}
\address{MingJie Li, College of Science, Minzu University of China, Beijing 100081, P. R. China}
\email{lmjmath@163.com}

\date{
May 8, 2021
%\today
}

\begin{abstract}  In this paper, we consider the Cauchy problem to the planar non-resistive magnetohydrodynamic equations without heat conductivity, and establish the global well-posedness of strong solutions with large initial data. The key ingredient of the proof is to establish the a priori estimates on the effective viscous flux and a newly introduced ``transverse effective viscous flux" vector field inducted by the transverse magnetic field. The initial density is assumed only to be uniformly bounded and of finite mass and, in particular, the vacuum and discontinuities of the density are allowed.
 \end{abstract}

\keywords{
compressible magnetohydrodynamic equations;  global strong solution;
vacuum; large initial data; effective viscous flux; transverse effective viscous flux.}

\subjclass[2010]{
35Q31; %Euler Equation
35L65; %Conservation law
76N15; %Gas dynamics, general
35B40; %Asymptotic behavior of solution
%35D30% Weak solution
}

\maketitle

%%%%%%%%%%%%%%%%%%%%%%%%%%%%%%%%%%%%%%%%%%%%%%%%%%%%%%%%%%%%%%%%%%%%%%%%%%%%%%%%%%%%%%%%%%%%%%%%%%

\section{Introduction}
The full compressible magnetohydrodynamic (MHD) equations  in the Eulerian coordinates are written as (see \cite{lau84}):
\begin{equation}\label{mh}
\left\{\begin{array}{lll}
\rho_t+\textrm{div}(\rho \bm{u})=0, \\ \vspace{3mm}
\displaystyle (\rho\bm{u})_t+\textrm{div}(\rho\bm{u}\otimes\bm{u})+\nabla
P=\frac{1}{4\pi} (\nabla\times \mathbf{B})\times \mathbf{B} +\textrm{div}\Psi\bm(u)\\ \vspace{3mm}
\displaystyle \mathbf{B}_t-\nabla\times (\bm{u}\times \mathbf{B})=- \nabla\times (\nu\nabla \times \mathbf{B}),\quad  \textrm{div}\mathbf{B}=0\\
\displaystyle \bigg(\mathcal{E}+ \frac{|\mathbf{B}|^2}{8\pi}\bigg)_t+\textrm{div}\big(\bm{u}(\mathcal{E}+P)\big)\\
=\frac{1}{4\pi} \textrm{div}\big((\bm{u}\times \mathbf{B})\times \mathbf{B}\big) +\textrm{div} \bigg(\frac{\nu}{4\pi}\mathbf{B}\times (\nabla\times \mathbf{B})+\bm{u}\Psi\bm(u)+\kappa\nabla\theta\bigg).
\end{array}\right.
\end{equation}
 Here the unknowns $\rho, \bm{u}=(u_1,u_2,u_3)\in \mathbb{ R}^3, P,\mathbf{B}=(\mathbf{B}_1,\mathbf{B}_2,\mathbf{B}_3) \in \mathbb{R}^3,$ and $\theta$ denote the density, velocity,  pressure, magnetic field and temperature, respectively. $\Psi\bm(u)$ is the viscous stress tensor given by
$$\Psi\bm(u)=2\mu \mathbb{D}(\bm{u})+\lambda'\textrm{div}\bm{u}\mathbf{I}_3,$$
with $\mathbb{D}(\bm{u}):= (\nabla \bm{u} + \nabla^t \bm{u})/2$, $\mathbf{I}_3$ the $3\times 3$ identity matrix, and $\nabla^t \bm{u}$ the transpose of the matrix $\nabla \bm{u}$. $\mathcal{E}$ is the energy given by $\mathcal{E}:=\rho(e+|\bm{u}|^2/2)$ with $e$ being the internal energy, $\rho|\bm{u}|^2/2$ the kinetic energy, and $|\mathbf{B}|^2/(8\pi)$ the magnetic energy. The viscosity coefficients $\mu$ and $\lambda'$ of the flow satisfy $\mu > 0 $ and $2\mu + 3\lambda' \geq 0$.
The parameter $\nu \geq 0$ is the magnetic diffusion coefficient of the magnetic field and $\kappa \geq 0$ the heat conductivity.

This is the first paper in our series results. We start with a simple case in which we do not consider two regular terms-- the magnetic diffusion  and the heat conductivity, i.e., $\nu= 0$ and $\kappa= 0$.
%This is a lillte bit difference with our intuition. Indeed, whihout those two regular terms make the proof easier, at least in our method.
In this paper, we consider the three-dimensional MHD flow with spatial variables $\mathbf{x}=(x, x_2, x_3)$, which is moving in the $x$ direction and uniform in the transverse direction $(x_2,x_3)$ (see \cite{wang03}):
\begin{equation}\label{1d}
	\begin{cases}
	\tilde{\rho}=\tilde{\rho}(x,t),\quad \tilde{p}=\tilde{p}(x,t),\\
	{\bm{u}}=(\tilde{u},\tilde{\bm{w}})(x,t),\quad\tilde{\bm{w}}=(u_2,u_3),\\
	\mathbf{B}=({b_1},\tilde{\bm{b}})(x,t),\quad\tilde{\bm{b}}=(b_2,b_3),	
	\end{cases}
\end{equation}
where $\tilde{u}$ and $b_1$ are the longitudinal velocity and longitudinal magnetic field, respectively, and $\tilde{\bm{w}}$ and $\tilde{\bm{b}}$ are the transverse velocity and transverse magnetic field, respectively.
With this special structure $(\ref{1d} )$, equations $(\ref{mh} )$ are reduced to the following system for the planar magnetohydrodynamic flows with constant longitudinal magnetic field $b_1=1$ (without loss of generality) and $\lambda =\lambda'+2\mu>0$:
\begin{equation}\label{mh1}
\begin{cases}
\tilde{\rho}_{t}+(\tilde{\rho} \tilde{u})_x=0 \\
(\tilde{\rho} \tilde{u})_{t}+(\tilde{\rho} \tilde{u}^2+\tilde{P})_x=(\lambda \tilde{u}_x)_x-\frac{1}{4\pi} \tilde{\bm{b}}\cdot\tilde{\bm{b}}_{x} \\
(\tilde{\rho}\tilde{\bm{w}})_t+(\tilde{\rho}\tilde{u}\tilde{\bm{w}})_x-\frac{1}{4\pi}\tilde{\bm{b}}_x=\mu\tilde{\bm{w}}_{xx} \\
\tilde{\bm{b}}_t+(\tilde{u}\tilde{\bm{b}})_x-\tilde{\bm{w}}_x=0\\%\nu\tilde{\bm{b}}_{xx} \\
\tilde{P}_t+\tilde{u}\tilde{P}_x+\gamma \tilde{P}\tilde{u}_{x}=(\gamma -1)\Big(\lambda (\tilde{u}_x)^2+\mu|\tilde{\bm{w}}_{x}|^2\Big),
\end{cases}
\end{equation}	
where the pressure $\tilde{P}$ is given by

\begin{equation}
\tilde{P}=R\tilde{\rho}\tilde{\theta}=(\gamma-1)\tilde{\rho} \tilde{e}.
\end{equation}

In the sequel, we set $R=1$ without loss of generality.
We complement the system with the following initial condition:
\begin{equation}
\big(\tilde{\rho},\tilde{u},\tilde{\bm{w}},\tilde{\bm{b}},\tilde{P}\big)|_{t=0}=\big(\tilde{\rho}_0(x),\tilde{u}_0(x),\tilde{\bm{w}}_0(x),\tilde{\bm{b}}_0(x),\tilde{P}_0(x)\big)\qquad x\in \mathbb{R},
\end{equation}

There are extensive studies concerning
the theory of strong solutions for the system $(\ref{mh})$. When consider the multi-dimensional case, due to the higher nonlinearity and the degeneracy caused by the vacuum, one could get the local existence  \cite{lisu11,zhu15}
or global in time existence under some smallness condition \cite{bla18, chtan10, chhu19,
hh17, lixu13, lv16, hlx12,wuwu17}.
The low Mach number limit of the MHD system has been justified by \cite{doujiang13, huwang09, jiangj14}.

For the planar MHD system, when the initial data is discontinuous,Chen-Wang \cite{chw02} obtained the global weak solution to the free boundary problem. The existence of large strong solutions to the initial-boundary value problem for planar MHD without vacuum has been proved by Wang \cite{wang03}.
Qin-Yao \cite{qin13} showed that there is a global
solutions to free-boundary problem of planar
magnetohydrodynamic equations with radiation, general heat conductivity and large initial data.
Under the condition $\gamma-1$  is sufficiently small, Hu \cite{hu15} proved the existence of global solutions and asymptotic behavior of planar magnetohydrodynamics with large data.
Fan-Huang-Li \cite{fanh17} obtained global strong solutions to the  planar MHD system with large initial data and vacuum. All of above results deal with the boundary value problem. Recently,
Ye-Li \cite{yeli19} proved that the existence of large strong solutions to the Cauchy problem for the isentropic  planar MHD equations with magnetic diffusion by weighted estimates.

Inspired by Li-Xin \cite{lixin17, lixin19},
in this paper, we will study the global existence and uniqueness of strong solution to the Cauchy problem for the planar MHD equations $(\ref{mh1})$. The initial data is assumed to be large and may contain vacuum.

The rest of this paper is arranged as follows: in the next section, Section 2, we reformulate the system $(\ref{mh1} )$ in the Lagrangian coordinates through the flow map and state our main idea in the proof.
 Section 3 is the main part of this paper, we consider the local existence and the a priori
estimates to system $(\ref{mhd} )$ with vacuum.Consequently, we arrive at the results of Theorems 2.1 in
Section 4.

Throughout this paper, we use $C$ to denote a general positive constant which may
different from line to line.

\section{Reformulation in Lagrangian coordinates and main result}
%In this section,  we will establish various a priori estimates  on the strong solution $\big(\tilde{\rho},\tilde{u},\tilde{\bm{w}},\tilde{\bm{b}},\tilde{P}\big)$ on the interval
%$[0,T]$ for any $T>0$.
Let $y$ be the lagrangian coordinate, and define the coordinate transform between the Lagrangian coordinate $y$ and the Euler coordinate $x$ as
$$
x=\eta(y,t),
$$
where $\eta(y, t)$ is the flow map determined by $\tilde{u}$, that is,
\begin{equation}\label{map}
\begin{cases}
\partial_{t}\eta(y,t)=\tilde{u}(\eta(y,t),t) \\
\eta(y,0)=y.
\end{cases}
\end{equation}

Define the new unknowns in the Lagrangian coordinate as
$$
(\rho,u,\bm{\omega},\bm{h},P)(y,t)=(\tilde{\rho},\tilde{u},\tilde{\omega},\tilde{\bm{b}},\tilde P)(\eta(y,t),t).
$$
Recalling the definition of $\eta$ and by straightforward calculations, one can check that
$$
\tilde{u}_x=\frac{u_y}{\eta_y}, \quad \tilde{u}_{xx}=\frac{1}{\eta_y}\left(\frac{u_y}{\eta_y}\right)_y,\quad
\tilde{u}_t+\tilde{u}\tilde{u}_x=u_t.
$$
The same relations hold for $\tilde\rho, \tilde{\bm{\omega}}, \tilde{\bm{b}},$ and $\tilde P$. Using these relations,
one can easily derive the corresponding system in the Lagrangian coordinate. However, in order to deal with the vacuum more efficiently, we introduce a new function which is the Jacobian between the Euler coordinate and the Lagrangian
coordinate:
$$J(y,t):=\eta_y(y,t).$$
One can easily check that
$$J_t=u_y.$$
Due to $(\ref{mh1})_1$ and $(\ref{mh1})_2$, it holds that
$(J\rho)_t=0,$ from which, setting $\rho|_{t=0}=\rho_0$ and since $J|_{t=0}=1$, one has
$J\rho=\rho_0.$

Using the calculations in the previous paragraph, one can rewrite system $(\ref{mh1})$ in the Lagrangian coordinate as
\begin{equation}\label{mhd}
\left\{
\begin{array}{rrcl}
&\displaystyle J_t & = & u_y,\vspace{6pt}\\
&\displaystyle \rho_0  u_t-\lambda \left(\frac{u_y}{J}\right)_y+P_y+\frac{1}{4\pi}\bm{h}\cdot\bm{h}_y& = &0,\vspace{6pt}\\
&\rho_0\bm{w}_t-\mu\left(\frac{\bm{w}_y}{J}\right)_{y}&=&\frac{1}{4\pi}\bm{h}_y, \vspace{6pt}\\
&\bm{h}_t+\frac{u_y}{J}\bm{h}-\frac{\bm{w}_y}{J}&=&0,\vspace{6pt}\\
& P_t+\gamma\frac{u_y}{J}P&=&(\gamma-1)\left(\lambda \left|\frac{u_y}{J}\right|^2+\mu\left|\frac{\bm{w}_y}{J}\right|^2\right).
\end{array}
\right.
\end{equation}

In the current paper, we consider the Cauchy problem and, thus, complement system $(\ref{mhd})$ with the following initial condition
\begin{equation}\label{ini}
\big(J,\sqrt{\rho_0}{u},\sqrt{\rho_0}{\bm{w}},{\bm{h}},{P}\big)|_{t=0}=\big(J_0,\sqrt{\rho_0}{u}_0,\sqrt{\rho_0}{\bm{w}}_0,
{\bm{h}}_0,{P}_0\big),
\end{equation}
where $J_0$ has uniform positive lower and upper bounds.
Note that by definition $J_0$ should be
identically one; however, in order to extend the local solution to be a global one, one may take some positive time $T_*$
as the initial time at which $J$ is not necessary to be identically one and, as a result, we have to deal with the
local well-posedness result with initial $J_0$ not being identically one. One may also note that the initial conditions in (\ref{ini}) are imposed on $(\sqrt{\rho_0}u,\sqrt{\rho_0}\bm{\omega})$ rather on $(u,\bm{\omega})$, in other words,
one only needs to specify the values of $(u,\bm{\omega})$ in the non-vacuum region $\{y\in\mathbb R|\rho_0(y)>0\}$.
	
Strong solutions to the Cauchy problem of system (\ref{mhd}) are defined as follows.

\begin{definition}
\label{DefLocal}
Given a positive time $T$. $\big(J,{u},{\bm{w}},{\bm{h}},{P}\big)$ is called a strong solution to system $(\ref{mhd})$, subject to $(\ref{ini})$, on $\mathbb{R} \times (0, T)$, if it has the properties
\begin{equation}\nonumber
\begin{aligned}
&\inf_{y\in\mathbb{R}, t\in (0, T )}J(y,t)>0,\\
&J-J_0\in C([0,T];H^1),\quad J_t \in L^\infty (0,T;L^{2})\cap L^2(0,T; H^1),\\
&(\sqrt{\rho_0}u,\sqrt{\rho_0}\bm{\omega})\in C([0,T];L^{2}),\quad (u_y,\bm{\omega}_y)\in L^{\infty}(0,T;L^{2})\cap L^2(0,T; H^1),\\
&(\sqrt{\rho_0}u_{t}, \sqrt{\rho_0}\bm{\omega}_t)\in L^{2}(0,T;L^{2}),\quad(\sqrt t u_{yt}, \sqrt t\bm{\omega}_{yt})\in L^2(0,T; L^2), \\
&\bm{h}\in C([0,T]; H^1),\quad  \bm{h}_t\in L^\infty(0,T; L^2)\cap L^2(0,T; H^1),\\
&P\in C([0,T];H^{1}),\quad
P_{t}\in L^{4}(0,T;L^{2})\cap L^{\frac43}(0,T; H^1),
\end{aligned}
\end{equation}
satisfies equations $(\ref{mhd})$, a.e.\,in $\mathbb{R} \times (0, T)$, and fulfills the initial condition $(\ref{ini})$.	
\end{definition}

\begin{definition}
$\big(J,{u},{\bm{w}},{\bm{h}},{P}\big)$ is called a global strong solution to system $(\ref{mhd})$, subject to $(\ref{ini})$, if it is a strong solution on $\mathbb{R} \times (0, T)$ for any positive time $T$. 	
\end{definition}

The main result in this paper reads as follows.

\begin{theorem}\label{thm}
Assume that $\rho_0\in L^1$ and $0\leq\rho_0\leq\bar\rho$ for some positive number $\bar\rho$, and that the initial data
$(J_0, u_0, \bm{\omega}_0, P_0)$ satisfies
$$
J_0\equiv1, \quad (\sqrt{\rho_0}u_0, \sqrt{\rho_0}\bm{\omega}_0, u_0', \bm{\omega}_0')\in L^2, \quad\bm{h}_0\in H^1, \quad 0\leq P_0\in L^1, \quad P_0'\in L^2.
$$
Then, there is a unique global strong solution to $(\ref{mhd})$ subject to (\ref{ini}).	
 \end{theorem}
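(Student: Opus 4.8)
The plan is to deduce Theorem~\ref{thm} from a local well-posedness result for $(\ref{mhd})$--$(\ref{ini})$ together with a family of a priori estimates, and to close the argument by a continuation scheme. For the local existence one linearizes $(\ref{mhd})$ by freezing $J$ (hence the coefficients $1/J$ in the principal parts) and the lower-order right-hand sides, solves the resulting linear parabolic problems for $u$ and $\bm{w}$, the linear transport equation for $\bm{h}$ and the ODE for $P$, and then runs a fixed-point argument in a space compatible with Definition~\ref{DefLocal}; the vacuum causes no essential difficulty because all estimates are carried on $\sqrt{\rho_0}u,\sqrt{\rho_0}\bm{w}$ rather than on $u,\bm{w}$, and $(\ref{mhd})_1$ keeps $J$ strictly positive on a short time interval (so the local theory does apply with a general $J_0$ having positive lower and upper bounds, as needed to restart at a later time). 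The local solution then extends to a global one as long as the norms in Definition~\ref{DefLocal} remain finite, which is guaranteed by the a priori estimates described next.

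The core is the a priori estimates, obtained in the following order. First, the basic energy identity: testing $(\ref{mhd})_2$ by $u$, $(\ref{mhd})_3$ by $\bm{w}$, $(\ref{mhd})_4$ by $\frac{J}{4\pi}\bm{h}$, adding the internal-energy balance derived from $(\ref{mhd})_1$ and $(\ref{mhd})_5$, and summing, all coupling terms cancel and one obtains conservation of $\int_{\mathbb{R}}\big(\frac{\rho_0}{2}(u^2+|\bm{w}|^2)+\frac{1}{\gamma-1}PJ+\frac{1}{8\pi}J|\bm{h}|^2\big)\,dy$; since $P\ge0$, this yields uniform $L^\infty_tL^2$ bounds on $\sqrt{\rho_0}u,\sqrt{\rho_0}\bm{w}$ and uniform $L^\infty_tL^1$ bounds on $PJ$ and $J|\bm{h}|^2$. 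Next one introduces the effective viscous flux $F:=\lambda\frac{u_y}{J}-P-\frac{1}{8\pi}|\bm{h}|^2$ and the transverse effective viscous flux $\bm{G}:=\mu\frac{\bm{w}_y}{J}+\frac{1}{4\pi}\bm{h}$; then $(\ref{mhd})_2$--$(\ref{mhd})_3$ read $F_y=\rho_0u_t$ and $\bm{G}_y=\rho_0\bm{w}_t$, so $F$ and $\bm{G}$ are one $y$-derivative more regular than $\frac{u_y}{J}$ and $\frac{\bm{w}_y}{J}$. Inserting $\frac{u_y}{J}=\frac1\lambda(F+P+\frac{1}{8\pi}|\bm{h}|^2)$ into $(\ref{mhd})_1$ gives $\lambda(\ln J)_t=F+P+\frac{1}{8\pi}|\bm{h}|^2$, and $L^\infty_y$ bounds on $\int_0^tF\,ds$ and on $\int_0^t(P+\frac{1}{8\pi}|\bm{h}|^2)\,ds$, obtained by a Kazhikhov--Shelukhin-type argument adapted to the vacuum following Li--Xin, then give pointwise bounds $0<\underline{c}\le J(y,t)\le\overline{C}$, which make the $u$- and $\bm{w}$-equations uniformly parabolic.

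The structural observation that tames the magnetic coupling is that $(\ref{mhd})_4$, rewritten via $\bm{G}$ and the identity $\frac{u_y}{J}=(\ln J)_t$, becomes the damped ODE (in $t$, with $y$ a parameter) $\bm{h}_t+\big((\ln J)_t+\frac{1}{4\pi\mu}\big)\bm{h}=\frac1\mu\bm{G}$; once $J$ is pinched between positive constants, $\int_0^t(\ln J)_s\,ds=\ln\big(J(t)/J(s)\big)$ is bounded, so the integrating factor produces $|\bm{h}(y,t)|\lesssim e^{-ct}|\bm{h}_0(y)|+\int_0^te^{-c(t-s)}|\bm{G}(y,s)|\,ds$, i.e.\ exponential-in-time damping of $\bm{h}$ controlled by $\bm{G}$, closing the loop between $\bm{h}$, the density bounds and $\bm{G}$. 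With $J\sim1$ the remaining estimates are fairly standard: testing $(\ref{mhd})_2$ by $u_t$ and $(\ref{mhd})_3$ by $\bm{w}_t$ gives $L^\infty_tL^2$ control of $u_y,\bm{w}_y$ and $L^2_t$ control of $\sqrt{\rho_0}u_t,\sqrt{\rho_0}\bm{w}_t$ (the cubic terms $\int\frac{u_y^3}{J^2}$, $\int\frac{u_y|\bm{w}_y|^2}{J^2}$ being absorbed by interpolation within the bootstrap); the elliptic relations $F_y=\rho_0u_t$, $\bm{G}_y=\rho_0\bm{w}_t$ then upgrade $u_y,\bm{w}_y$ to $L^2_tH^1$; and differentiating the equations for $\bm{h}$ and $P$ and using their transport/ODE structure gives the remaining regularity. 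Since the data carry no compatibility conditions and $\rho_0$ need not be bounded below, the estimates involving one time derivative must be weighted by powers of $t$ near $t=0$ (whence the $\sqrt t\,u_{yt},\sqrt t\,\bm{w}_{yt}$ in Definition~\ref{DefLocal}), which is handled by multiplying the relevant differential inequalities by $t$ and invoking parabolic smoothing, in the spirit of Li--Xin \cite{lixin17,lixin19}.

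I expect the principal difficulty to be the upper bound on $J$: the effective viscous flux now carries the extra term $\frac{1}{8\pi}|\bm{h}|^2$, and since $\bm{h}$ is merely transported, the uniform control of $\int_0^t\|\bm{h}(\cdot,s)\|_{L^\infty}\,ds$ needed there can only come from the damped-ODE representation above, which in turn requires $\bm{G}$, hence $\bm{w}_y$, hence the higher-order parabolic estimates that presuppose the bounds on $J$. Disentangling this circular dependence without any smallness hypothesis---by ordering the estimates so that the constants are genuinely independent of $T$ and the exponential decay of $\bm{h}$ absorbs the feedback---is the crux of the argument. Finally, uniqueness is comparatively routine: for two solutions with the same initial data one derives an energy inequality for the differences $(J_1-J_2,u_1-u_2,\bm{w}_1-\bm{w}_2,\bm{h}_1-\bm{h}_2,P_1-P_2)$, using the bounds already established, and closes it by Gr\"onwall.
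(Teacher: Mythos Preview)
Your proposal isolates the right objects---the effective viscous flux and the transverse effective viscous flux---and correctly identifies the upper bound on $J$ as the crux, but it does not actually break the circular dependence you describe. You propose to pinch $J$ from both sides early (via a Kazhikhov--Shelukhin argument), then run standard parabolic estimates (testing $(\ref{mhd})_2$ by $u_t$, $(\ref{mhd})_3$ by $\bm{w}_t$) and close via the damped ODE for $\bm{h}$. But the identity $\lambda\ln J=\int_{-\infty}^y\rho_0(u-u_0)\,dy'+\int_0^t\big(P+\tfrac{1}{8\pi}|\bm{h}|^2\big)\,d\tau$ only gives the \emph{lower} bound on $J$ from the basic energy (since $P,|\bm{h}|^2\ge0$); the upper bound would require $L^\infty_y$ control of $\int_0^t(P+\tfrac{1}{8\pi}|\bm{h}|^2)\,d\tau$, which the energy alone does not supply. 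Your damped-ODE representation for $\bm{h}$ then needs $L^\infty$-type control of $\bm{G}$, which in your scheme comes from parabolic estimates that already presuppose $J$ bounded above. So the loop is not closed.

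The paper resolves this differently, and the point is that the upper bound on $J$ is \emph{not} needed for the effective-flux estimates. One first gets $J\ge\underline J$ from the energy, then derives the parabolic equation $\bm{F}_t-\tfrac{\mu}{J}\big(\tfrac{\bm{F}_y}{\rho_0}\big)_y=-\tfrac{u_y}{J}\bm{F}+\tfrac{1}{4\pi}\tfrac{\bm{\omega}_y}{J}$ for the transverse flux and tests it with $J\bm{F}$. The dangerous cubic term $\int u_y|\bm{F}|^2\,dy$ is integrated by parts to $-2\int u\,\bm{F}\cdot\bm{F}_y\,dy$ and bounded by $\|\sqrt{\rho_0}u\|_2\,\|\bm{F}_y/\sqrt{\rho_0}\|_2\,\|\bm{F}\|_\infty$, using only the energy bound on $\sqrt{\rho_0}u$; the interpolation $\|\bm{F}\|_\infty^2\le C\|\sqrt J\bm{F}\|_2\|\bm{F}_y/\sqrt{\rho_0}\|_2$ uses only $J\ge\underline J$ and $\rho_0\le\bar\rho$. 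The same device, coupled with an $L^4$ estimate on $\bm{h}$, handles the longitudinal flux $G$. Only \emph{after} these $L^\infty_tL^2$ and $L^4_tL^\infty$ bounds on $\bm{F},G$ does one read off $\|\bm{h}\|_\infty,\|P\|_\infty,\|J\|_\infty$ from the pointwise ODEs (your damped representation among them). The $u_y,\bm{\omega}_y$ estimates then follow algebraically from $\bm{F},G,\bm{h},P$ and the bounds on $J$---the paper never tests $(\ref{mhd})_2,(\ref{mhd})_3$ by $u_t,\bm{w}_t$. In short, the missing idea in your plan is that the effective-flux equations, tested against $J\bm{F}$ and $JG$, close using only the \emph{lower} bound on $J$; this is what severs the circularity.
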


\begin{remark}
(i) Due to the regularities of the velocity stated in Definition \ref{DefLocal}, one can transform the global solutions established in Theorem \ref{thm} in the Lagrangian coordinate back to the corresponding global solutions in the Euler coordinate. In other words, under the same condition as in Theorem \ref{thm}, the Cauchy problem to (\ref{mh1}) has a unique global strong solution.

(ii) Noticing that we only need the initial density to be nonnegative and uniformly bounded, the initial density could be very general and, in particular, it allows to have a compact support or a single point vacuum or have discontinuities.

(iii) Since $J$ has positive lower and upper bounds and $\rho=\frac{\rho_0}{J}$, one can see that
the vacuum of system (\ref{mhd}) can neither disappear nor be reformulated in the later time, and the discontinuities of the density are propagated along the characteristic lines.
\end{remark}

%\begin{theorem} (equivalence with the free boundary problem)
%Suppose that the initial values satisfy
%
% Then the Cauchy problem is equivalence with the free boundary problem.	
% \end{theorem}
In order to prove the global existence, one has to carry out suitable a priori estimates
which are finite up to any finite time. Since system (\ref{mhd}) contains the compressible Navier-Stokes equations without heat conductivity
as a subsystem, we attempt to adopt the arguments in Li \cite{li20} and Li--Xin \cite{lixin17} to achieve these a priori estimates.
As already shown in \cite{li20,lixin17}, the effective viscous flux, i.e., the quantity $\lambda\frac{u_y}{J}-P$ there, plays a central role
in the proof. It is reasonable to believe that it is also the case for system (\ref{mhd}).
Comparing system (\ref{mhd}) with the compressible Navier-Stokes equations, it is natural
to identify the following quantity
$$
	G:=\lambda \frac{u_y}{J}-P-\frac{\bm{|h|^2}}{8\pi}
$$
as the new effective viscous flux. One can check that $G$ satisfies
 \begin{equation}
 \label{EQG}
G_t-\frac{\lambda}{J}\left(\frac{G_y}{\rho_0}\right)_y=-\gamma\frac{u_y}{J}G+
\frac{2-\gamma}{8\pi}\frac{u_y}{J}|\bm{h}|^2-(\gamma-1)\mu\left|\frac{\bm{w}_y}{J}\right|^2-\frac{\bm{h}\cdot\bm{\omega_y}}
{4\pi J},
 \end{equation}
which reduces to the one in \cite{li20,lixin17} if removing those terms involving $\bm{h}$ and $\bm{\omega}$.
The key is to get the $L^\infty(0,T; L^2)$ estimate of $G$, which is expected to be achieved by testing (\ref{EQG}) with $JG$.
In the context of the Navier-Stokes equations as considered in \cite{li20,lixin17}, where one only needs to deal with the integral
corresponding to $\frac{u_y}{J}G$, this was achieved based on the basic energy estimate and the property that $J$ has uniform positive lower
bound; in other words, the basic energy estimate and the positive lower bound of $J$ are sufficient to deal with the integral related to
$\frac{u_y}{J}G$. As for system (\ref{mhd}),
one can use the same idea as in \cite{li20,lixin17} to deal with the integral related to $\frac{u_y}{J}G$ in (\ref{EQG}); however,
the integrals related to other terms in (\ref{EQG}) can not be dealt with in the same way.
To deal with other terms in (\ref{EQG}),
in the same spirit of $G$, we introduce a new vector field $\bm{F}$, called ``transverse effect viscous flux", as
\begin{equation*}
	\bm{F}:=\mu\frac{\bm{\omega}_y}{J}+\frac{\bm{h}}{4\pi}
\end{equation*}
which satisfies
\begin{equation*}
\bm{F}_t-\frac{\mu}{J}\left(\frac{\bm{F}_y}{\rho_0}\right)_y=-\frac{u_y}{J}\bm{F}+\frac{1}{4\pi}\frac{\bm{\omega}_y}{J}. 	
\end{equation*}
One can get the $L^\infty(0,T; L^2)$ estimate of $\bm{F}$ through testing the above with $J\bm{F}$: same as before,
the basic energy estimate and the
positive lower bound of $J$ are sufficient to deal with the integral related to the term $\frac{u_y}{J}\bm{F}$; while one can get the
$L^2$ space-time integrability of $\frac{\bm{\omega}_y}{\sqrt J}$ from $(\ref{mhd})_3$ based on the basic energy estimate. With this a priori
estimate on $\bm{F}$ at hand, and combining the $L^2$ type estimate on $G$ with
$L^4$ type estimate on $\bm{h}$, one can successfully deal with all integrals related to the terms on the right hand
side of (\ref{EQG}) and, as a result, get the desired $L^\infty(0,T; L^2)$ estimate for $G$. Based on this a priori estimate, one can
further get the higher order a priori estimates and finally the global well-posedness.

\section{Local well-posedness and a priori estimates}

We start with the following local well-posedness result, which can be proved in the same way as in \cite{li20,lixin17}.

\begin{lemma}\label{lem1}
Assume that $0\leq\rho_0\leq\bar\rho$ and $\underline j\leq J_0\leq\bar j$, for some positive constants
$\bar\rho, \underline j, \bar j$, and further that
$$
(J_0',\sqrt{\rho_0}u_0, \sqrt{\rho_0}\bm{\omega}_0, u_0', \bm{\omega}_0')\in L^2, \quad(\bm{h}_0,P_0)\in H^1.
$$
Then, there is a unique strong solution $(J,u,\bm{\omega},\bm{h}, P)$ to $(\ref{mhd})$, subject to $(\ref{ini})$, on $\mathbb R\times(0,T_0)$, for some positive time $T_0$ depending only on $\mu, \lambda, \bar\rho, \underline j, \bar j$, and
$$
\|(J_0',\sqrt{\rho_0}u_0, \sqrt{\rho_0}\bm{\omega}_0, u_0', \bm{\omega}_0')\|_2+\|(\bm{h_0},P_0)\|_{H^1}.
$$
\end{lemma}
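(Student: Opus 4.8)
The plan is to build the solution by a linearization--iteration scheme coupled to a vacuum regularization, to derive a priori bounds that are uniform both along the iteration and in the regularization parameter, to pass to the limit, and finally to prove uniqueness by an energy estimate on the difference of two solutions. Since $(\ref{mhd})$ contains the compressible Navier--Stokes equations without heat conductivity as a subsystem, the scheme is that of \cite{li20,lixin17}; below I indicate its structure and the changes forced by the magnetic field $\bm h$, which is governed by an equation carrying \emph{no} diffusion.

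\textbf{The iteration.} Starting from $(J^0,u^0,\bm w^0,\bm h^0,P^0)\equiv(J_0,u_0,\bm w_0,\bm h_0,P_0)$ (frozen in $t$), I would define the $(k{+}1)$-st iterate from the $k$-th one by: first $J^{k+1}:=J_0+\int_0^tu^k_y\,ds$, so that $J^{k+1}_t=u^k_y$, $J^{k+1}|_{t=0}=J_0$, and for $T$ small $\underline j/2\le J^{k+1}\le2\bar j$ with $J^{k+1}-J_0\in C([0,T];H^1)$; next solve the linear ODEs in $t$, pointwise in $y$,
\[
\bm h^{k+1}_t=-\tfrac{u^k_y}{J^{k+1}}\,\bm h^{k+1}+\tfrac{\bm w^k_y}{J^{k+1}},\qquad
P^{k+1}_t=-\gamma\tfrac{u^k_y}{J^{k+1}}\,P^{k+1}+(\gamma-1)\Big(\lambda\big|\tfrac{u^k_y}{J^{k+1}}\big|^2+\mu\big|\tfrac{\bm w^k_y}{J^{k+1}}\big|^2\Big),
\]
with the prescribed data; finally solve the two linear parabolic problems
\[
\rho_0u^{k+1}_t-\lambda\Big(\tfrac{u^{k+1}_y}{J^{k+1}}\Big)_y=-P^{k+1}_y-\tfrac{1}{4\pi}\bm h^{k+1}\!\cdot\bm h^{k+1}_y,\qquad
\rho_0\bm w^{k+1}_t-\mu\Big(\tfrac{\bm w^{k+1}_y}{J^{k+1}}\Big)_y=\tfrac{1}{4\pi}\bm h^{k+1}_y,
\]
with the given data. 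As $\rho_0$ may vanish these last problems are degenerate, so I would first replace $\rho_0$ by $\rho_0+\delta$ ($\delta>0$), solve the resulting uniformly parabolic linear systems by standard theory, and send $\delta\to0$ only after the $\delta$-uniform estimates below.

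\textbf{Uniform estimates and convergence.} Testing the $u^{k+1}$- and $\bm w^{k+1}$-equations with $u^{k+1}_t$ and $\bm w^{k+1}_t$, integrating by parts and absorbing the right-hand sides and the terms produced by $\partial_t(1/J^{k+1})$ with the help of $\underline j/2\le J^{k+1}\le2\bar j$, $0\le\rho_0\le\bar\rho$, the bounds on the $k$-th iterate, and one-dimensional Sobolev embedding, controls $\|(u^{k+1}_y,\bm w^{k+1}_y)\|_{L^\infty(0,T;L^2)}$ and $\|(\sqrt{\rho_0}u^{k+1}_t,\sqrt{\rho_0}\bm w^{k+1}_t)\|_{L^2(0,T;L^2)}$; rewriting $\lambda(u^{k+1}_y/J^{k+1})_y=\rho_0u^{k+1}_t+P^{k+1}_y+\tfrac1{4\pi}\bm h^{k+1}\!\cdot\bm h^{k+1}_y$ (and similarly for $\bm w^{k+1}$) and using $\rho_0u^{k+1}_t=\sqrt{\rho_0}\,\sqrt{\rho_0}u^{k+1}_t$ then gives $(u^{k+1}_y,\bm w^{k+1}_y)\in L^2(0,T;H^1)$. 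Differentiating the parabolic equations in $t$ and testing with $t\,u^{k+1}_t$, $t\,\bm w^{k+1}_t$ yields the time-weighted bounds $(\sqrt t\,u^{k+1}_{yt},\sqrt t\,\bm w^{k+1}_{yt})\in L^2(0,T;L^2)$. For $\bm h^{k+1}$ and $P^{k+1}$, differentiating their ODEs in $y$ and invoking Gronwall's inequality bounds them in $C([0,T];H^1)$ and places $\bm h^{k+1}_t,P^{k+1}_t$ in the spaces of Definition \ref{DefLocal}, the precise exponents ($L^\infty$, $L^4$, $L^{4/3}$) coming from interpolation between $L^\infty(0,T;L^2)$ and $L^2(0,T;H^1)$ for $u^k_y/J^{k+1}$, $\bm w^k_y/J^{k+1}$. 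All of these are of the form $\mathcal N_{k+1}(T)\le C_0+T\,C(\mathcal N_k(T))$ with $C_0$ and $C(\cdot)$ depending only on $\mu,\lambda,\bar\rho,\underline j,\bar j$ and the initial norm in the statement; hence there is $T_0$, with the claimed dependence, on which the iterates remain in a fixed bounded set. A standard contraction estimate for the differences in a lower-order norm (say $\sup_t\|\sqrt{\rho_0}(u^{k+1}-u^k),\sqrt{\rho_0}(\bm w^{k+1}-\bm w^k)\|_{L^2}+\|(u^{k+1}-u^k,\bm w^{k+1}-\bm w^k)_y\|_{L^2(0,T_0;L^2)}$, together with the corresponding differences of $J,\bm h,P$) then shows the scheme is Cauchy; the higher-order bounds survive the limit by weak-$*$ compactness and lower semicontinuity of norms, and the limit is readily checked to solve $(\ref{mhd})$ and to attain $(\ref{ini})$.

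\textbf{Uniqueness and the main difficulty.} Uniqueness follows by writing the system for the difference of two solutions with the same data, running the same $\sqrt{\rho_0}$-weighted energy estimate, bounding every nonlinear term by the regularity already established, and closing with Gronwall. The principal obstacle throughout is the vacuum: since $\rho_0$ has no positive lower bound the momentum equations are degenerate parabolic, so $u_t,\bm w_t$ cannot be controlled in $L^2$ --- only $\sqrt{\rho_0}u_t,\sqrt{\rho_0}\bm w_t$ --- and the whole estimate hierarchy must be run with the weight $\sqrt{\rho_0}$ and kept uniform in $\delta$. A secondary point special to this system, as opposed to the Navier--Stokes case of \cite{li20,lixin17}, is that $\bm h$ solves a transport--reaction equation with no smoothing, so its $H^1$-regularity can only be \emph{propagated}; this propagation must be matched exactly against the regularity $u_y/J,\bm w_y/J\in L^2(0,T;H^1)$ delivered by the parabolic estimates, which is why the function spaces in Definition \ref{DefLocal} are tuned as they are.
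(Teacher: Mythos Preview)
Your proposal is correct and takes essentially the same approach as the paper: the paper itself gives no proof of this lemma, stating only that it ``can be proved in the same way as in \cite{li20,lixin17},'' and your linearization--iteration scheme with vacuum regularization $\rho_0\mapsto\rho_0+\delta$, $\sqrt{\rho_0}$-weighted energy estimates, and Gronwall-type propagation of $H^1$ regularity for the non-diffusive $\bm h$ and $P$ equations is precisely the method of those references, adapted to accommodate the extra magnetic unknowns. The additional remarks you make about matching the propagated regularity of $\bm h$ to the parabolic regularity of $(u_y,\bm w_y)$ are accurate and explain the choice of function spaces in Definition~\ref{DefLocal}.
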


Due to Lemma \ref{lem1}, for any initial data satisfying the conditions in Theorem \ref{thm}, there is a unique local strong solution $(J, u, \bm{\omega}, \bm{h}, P)$ to system (\ref{mhd}) subject to (\ref{ini}). By iteratively applying Lemma \ref{lem1}, one can extend this solution uniquely to the maximal time of existence $T_\text{max}$.
In the rest of this section, we always assume that $(J, u, \bm{\omega}, \bm{h}, P)$ is a strong solution to system (\ref{mhd}) subject to (\ref{ini}) on $\mathbb R\times(0,T)$ for any $T\in(0,T_\text{max})$.

A series of a priori estimates for $(J, u, \bm{\omega}, \bm{h}, P)$ are established in the rest of this section, which are crucial in the next section to show the global well-posedness. In the rest of this section, it is always assumed that $J_0 \equiv 1$.

The basic energy identity is stated in the following lemma.

\begin{lemma}\label{leme0}
It holds that
$$
\int_{\mathbb{R}}\left(\frac{\rho_0 u^2}{2}+\frac{\rho_0 |\bm{\omega}|^2}{2}+\frac{J|\bm{h}|^2}{8\pi}+\frac{JP}{\gamma-1}\right)\mathrm{d}y=E_0
$$
for any $t\in(0,T)$, where
$E_0:=\int_{\mathbb{R}}\left(\frac{\rho_0 u_0^2}{2}+\frac{\rho_0 |\bm{\omega_0}|^2}{2}+\frac{|\bm{h_0}|^2}{8\pi}+\frac{P_0}{\gamma-1}\right)\mathrm{d}y.$
\end{lemma}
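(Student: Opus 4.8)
The plan is to run the standard energy method on system $(\ref{mhd})$: test each equation against a suitably weighted function, integrate over $\mathbb{R}$, add the resulting identities, and verify that every ``flux'' and ``coupling'' term cancels in pairs, so that only the time derivative of the total energy survives. The regularity listed in Definition \ref{DefLocal} is what makes all the manipulations below legitimate.

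Concretely, I would first multiply $(\ref{mhd})_2$ by $u$ and integrate over $\mathbb{R}$; integrating by parts in the viscous and pressure terms gives
$$\frac{d}{dt}\int_{\mathbb{R}}\frac{\rho_0u^2}{2}\,\mathrm{d}y+\int_{\mathbb{R}}\lambda\frac{u_y^2}{J}\,\mathrm{d}y-\int_{\mathbb{R}}Pu_y\,\mathrm{d}y+\frac{1}{4\pi}\int_{\mathbb{R}}(\bm{h}\cdot\bm{h}_y)u\,\mathrm{d}y=0.$$
Next, multiplying $(\ref{mhd})_3$ by $\bm{w}$ and integrating yields
$$\frac{d}{dt}\int_{\mathbb{R}}\frac{\rho_0|\bm{w}|^2}{2}\,\mathrm{d}y+\int_{\mathbb{R}}\mu\frac{|\bm{w}_y|^2}{J}\,\mathrm{d}y+\frac{1}{4\pi}\int_{\mathbb{R}}\bm{h}\cdot\bm{w}_y\,\mathrm{d}y=0.$$
Then, multiplying $(\ref{mhd})_4$ by $\frac{J\bm{h}}{4\pi}$, integrating, and using $J_t=u_y$ from $(\ref{mhd})_1$ to write $\frac{1}{4\pi}\int_{\mathbb{R}}J\bm{h}\cdot\bm{h}_t\,\mathrm{d}y=\frac{d}{dt}\int_{\mathbb{R}}\frac{J|\bm{h}|^2}{8\pi}\,\mathrm{d}y-\frac{1}{8\pi}\int_{\mathbb{R}}u_y|\bm{h}|^2\,\mathrm{d}y$, I obtain
$$\frac{d}{dt}\int_{\mathbb{R}}\frac{J|\bm{h}|^2}{8\pi}\,\mathrm{d}y+\frac{1}{8\pi}\int_{\mathbb{R}}u_y|\bm{h}|^2\,\mathrm{d}y-\frac{1}{4\pi}\int_{\mathbb{R}}\bm{w}_y\cdot\bm{h}\,\mathrm{d}y=0.$$
Finally, multiplying $(\ref{mhd})_5$ by $\frac{J}{\gamma-1}$, integrating, and once more using $J_t=u_y$, I get
$$\frac{d}{dt}\int_{\mathbb{R}}\frac{JP}{\gamma-1}\,\mathrm{d}y+\int_{\mathbb{R}}u_yP\,\mathrm{d}y-\int_{\mathbb{R}}\lambda\frac{u_y^2}{J}\,\mathrm{d}y-\int_{\mathbb{R}}\mu\frac{|\bm{w}_y|^2}{J}\,\mathrm{d}y=0.$$

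Adding these four identities, the viscous dissipation terms $\int\lambda u_y^2/J$ and $\int\mu|\bm{w}_y|^2/J$ cancel between the momentum balances and the pressure balance (this is precisely where the absence of heat conductivity still allows a closed energy identity), the pressure-work terms $\mp\int Pu_y$ cancel, the induction-coupling terms $\pm\frac{1}{4\pi}\int\bm{h}\cdot\bm{w}_y$ cancel, and, using $\bm{h}\cdot\bm{h}_y=\frac{1}{2}(|\bm{h}|^2)_y$ together with one further integration by parts, $\frac{1}{4\pi}\int(\bm{h}\cdot\bm{h}_y)u=-\frac{1}{8\pi}\int|\bm{h}|^2u_y$ cancels the remaining magnetic-work term coming from $(\ref{mhd})_4$. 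Hence $\frac{d}{dt}\int_{\mathbb{R}}\big(\frac{\rho_0u^2}{2}+\frac{\rho_0|\bm{w}|^2}{2}+\frac{J|\bm{h}|^2}{8\pi}+\frac{JP}{\gamma-1}\big)\,\mathrm{d}y=0$; integrating in time and invoking $J_0\equiv1$ identifies the constant with $E_0$.

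The only genuinely delicate point is the rigorous justification of the whole-line integrations by parts, i.e. that the boundary contributions at $y=\pm\infty$ vanish and that every integrand occurring above is finite and time-integrable. I would handle this by appealing to the function spaces in Definition \ref{DefLocal} and, if needed, a spatial cutoff together with an approximation argument to legitimize using $u$, $\bm{w}$, $J\bm{h}$, and $J$ as test functions, exactly in the spirit of \cite{li20,lixin17}. Once that is in place, the computation above is elementary and the identity follows.
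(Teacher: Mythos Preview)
Your proposal is correct and follows essentially the same approach as the paper: test $(\ref{mhd})_2$--$(\ref{mhd})_5$ against $u$, $\bm{w}$, $J\bm{h}/4\pi$, and $J/(\gamma-1)$ respectively, use $J_t=u_y$, and verify the pairwise cancellations. The only cosmetic differences are that the paper integrates the magnetic term $\frac{1}{4\pi}\int(\bm{h}\cdot\bm{h}_y)u\,\mathrm{d}y$ by parts immediately in the first identity (rather than deferring it to the summation step) and combines the first three identities into an intermediate equality before adding the pressure balance.
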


\begin{proof}
Multiplying $(\ref{mhd})_2$ by $u$ and integrating the resulting over $\mathbb{R}$, one gets by integration by parts that
$$
\frac{1}{2}\frac{d}{dt}\|\sqrt{\rho_0} u\|_{2}^2+\lambda \left\|\frac{u_y}{\sqrt{J}}\right\|_{2}^2=\int_{}\left(\frac{|\bm{h}|^2}{8\pi}+P\right)u_y\mathrm{d}y.
$$
Then, multiplying $(\ref{mhd})_3$ with $\bm{\omega}$ and integrating over $\mathbb{R}$, it follows from integrating by parts that
$$
\frac{1}{2}\frac{d}{dt}\|\sqrt{\rho_0} \bm{\omega}\|_{2}^2+\mu\left\|\frac{\bm{\omega}_y}{\sqrt{J}}\right\|_{2}^2=-\int_{}\frac{\bm{\omega}_y\cdot \bm{h}}{4\pi}\mathrm{d}y.
$$
Finally, multiplying $(\ref{mhd})_4$ with $J\bm{h}$, integrating over $\mathbb{R}$, and recalling that $J_t=u_y$, it holds
by integration by parts that
$$
\frac{1}{2}\frac{d}{dt}\int_{}J|\bm{h}|^2\mathrm{d}y=-\frac{1}{2}\int_{}u_y|\bm{h}|^2\mathrm{d}y
+\int_{}\bm{\omega}_y\cdot\bm{h}\mathrm{d}y.
$$
Combining the previous three equalities leads to
\begin{equation}\label{e1}
	\frac{d}{dt}\left(\frac{1}{2}\|\sqrt{\rho_0} u\|_{2}^2+\frac{1}{2}\|\sqrt{\rho_0} \bm{\omega}\|_{2}^2+\frac{\|\sqrt{J}\bm{h}\|_{2}^2}{8\pi}\right)
+\lambda\left\|\frac{u_y}{\sqrt{J}}\right\|_{2}^2+\mu\left\|\frac{\bm{\omega}_y}{\sqrt{J}}
\right\|_{2}^2=\int_{}P u_y\mathrm{d}y.
\end{equation}
In order to deal with the right hand side of $(\ref{e1})$, we multiply $(\ref{mhd})_5$ by $J$ and integrate the resulting equation over $\mathbb{R}$ to get
$$
	\frac{1}{\gamma-1}\frac{d}{dt}\int_{}PJ\mathrm{d}y+\int_{}u_y P\mathrm{d}y=\int_{}\left(\lambda\frac{(u_y)^2}{J}+\mu\frac{|\bm{w}_y|^2}{J}\right)\mathrm{d}y.
$$
Summing this with (\ref{e1}) yields
\begin{equation*}
\frac{d}{dt}\int_{}\left(\frac{1}{2}\rho_0 u^2+\frac{1}{2}\rho_0 |\bm{\omega}|^2+\frac{J|\bm{h}|^2}{8\pi}+\frac{JP}{\gamma-1}\right)\mathrm{d}y=0
\end{equation*}
from which, integrating with respect to $t$, the conclusion follows.
\end{proof}

The next lemma establishes the uniform positive lower bound of $J$.

\begin{lemma}\label{lj}
It holds that
\begin{equation}\nonumber%\label{c2}
\inf_{(y,T)\in\mathbb R\times(0,T)}J(y,t)\ge e^{-\frac{2\sqrt{2}}{\lambda}\sqrt{\| \rho_0 \|_1 E_0}}=:\underline{J}.
\end{equation}
%and, consequently,
%\begin{equation}\nonumber%\label{c2}
%\rho(y,t)\leq \frac{\rho_0}{\underline{J}}, \ (y,t)\in \mathbb{R}\times(0,T].
%\end{equation}
\end{lemma}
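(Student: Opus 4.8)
The plan is to integrate the continuity-type relation $J_t=u_y$ in time to obtain a pointwise formula for $\log J$, and then bound the resulting spatial integral of $u_y$ using the basic energy identity from Lemma~\ref{leme0}. Concretely, since $J_t = u_y$ and $J_0\equiv 1$, we have for each fixed $y$
$$
\log J(y,t) = \int_0^t \frac{u_y(y,s)}{J(y,s)}\,\mathrm{d}s \cdot \text{(not quite)},
$$
so instead it is cleaner to write $J(y,t) = 1 + \int_0^t u_y(y,s)\,\mathrm{d}s$ directly, or better, to use the structure $\partial_t \log J = u_y/J = \tilde u_x$ and estimate $\tilde u_x$. Actually the most robust route, following Li--Xin, is: from $\rho_0 u_t = \lambda (u_y/J)_y - P_y - \frac{1}{4\pi}\bm h\cdot\bm h_y$, integrate in $y$ over $(-\infty, y)$ to express $\lambda \frac{u_y}{J}$ in terms of $P$, $|\bm h|^2$, and $\int_{-\infty}^y \rho_0 u_t$, i.e.
$$
\lambda\frac{u_y}{J}(y,t) = P(y,t) + \frac{|\bm h(y,t)|^2}{8\pi} + \frac{\mathrm{d}}{\mathrm{d}t}\int_{-\infty}^y \rho_0 u\,\mathrm{d}z,
$$
using that $\bm h\cdot\bm h_y = \frac12 (|\bm h|^2)_y$ and that the far-field values vanish.

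Next I would substitute this into $\partial_t\log J = u_y/J$ and integrate from $0$ to $t$. The terms $P\ge 0$ and $|\bm h|^2\ge 0$ contribute with a favorable sign after dividing by $\lambda>0$, so they can be dropped when seeking a \emph{lower} bound on $\log J$; what remains is
$$
\log J(y,t) \ge \frac{1}{\lambda}\left(\int_{-\infty}^y \rho_0 u(z,t)\,\mathrm{d}z - \int_{-\infty}^y \rho_0 u_0(z)\,\mathrm{d}z\right).
$$
Now I estimate $\left|\int_{-\infty}^y \rho_0 u\,\mathrm{d}z\right| \le \|\sqrt{\rho_0}\|_2 \, \|\sqrt{\rho_0}u\|_2 = \|\rho_0\|_1^{1/2}\|\sqrt{\rho_0}u\|_2$ by Cauchy--Schwarz, and the same for the initial term. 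By Lemma~\ref{leme0}, $\frac12\|\sqrt{\rho_0}u\|_2^2 \le E_0$, hence $\|\sqrt{\rho_0}u\|_2 \le \sqrt{2E_0}$, and likewise $\|\sqrt{\rho_0}u_0\|_2\le\sqrt{2E_0}$ (indeed it is even part of $E_0$). Therefore
$$
\log J(y,t) \ge -\frac{2}{\lambda}\sqrt{\|\rho_0\|_1}\,\sqrt{2E_0} = -\frac{2\sqrt2}{\lambda}\sqrt{\|\rho_0\|_1 E_0},
$$
which gives $J(y,t)\ge e^{-\frac{2\sqrt2}{\lambda}\sqrt{\|\rho_0\|_1 E_0}}=\underline J$, the claimed bound.

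The one point requiring care — the main obstacle such as it is — is the justification of the integration by parts / vanishing of boundary terms at $y=\pm\infty$ when writing $\lambda\frac{u_y}{J} = P + \frac{|\bm h|^2}{8\pi} + \frac{\mathrm{d}}{\mathrm{d}t}\int_{-\infty}^y\rho_0 u$; this uses the decay encoded in the function space framework of Definition~\ref{DefLocal} (e.g. $\sqrt{\rho_0}u\in L^2$, $u_y\in H^1$, $P, \bm h\in H^1$ so that $P\to 0$, $|\bm h|^2\to 0$ as $|y|\to\infty$), together with the fact that $\int_{-\infty}^{+\infty}\rho_0(u_t)$ need not vanish but only its indefinite integral from $-\infty$ appears. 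One must also confirm that $\partial_t\int_{-\infty}^y\rho_0 u\,\mathrm{d}z$ is legitimate, which follows from $\sqrt{\rho_0}u_t\in L^2(0,T;L^2)$ and $\sqrt{\rho_0}u\in C([0,T];L^2)$. Once these standard regularity facts are invoked, the estimate is a direct computation and no further ideas are needed.
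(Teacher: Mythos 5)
Your proposal is correct and follows essentially the same route as the paper: both integrate the momentum equation from $-\infty$ to $y$ to identify $\lambda\,\partial_t\ln J$ with $P+\tfrac{|\bm h|^2}{8\pi}+\partial_t\int_{-\infty}^y\rho_0 u\,\mathrm{d}z$, drop the nonnegative terms for the lower bound, and close with Cauchy--Schwarz plus the energy identity of Lemma~\ref{leme0}. The constants match the stated $\underline J$ exactly.
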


\begin{proof} We insert $(\ref{mhd})_1$ into $(\ref{mhd})_2$ to get
\begin{equation*}
\lambda(\ln J)_{yt}=\rho_0 u_t +P_y+\left(\frac{|\bm{h}|^2}{8\pi}\right)_y.	
\end{equation*}
Integrating the above mover $(z,y)\times(0,t)$, letting $z\rightarrow-\infty$, and noticing that $J \rightarrow 1,\bm{h} \rightarrow 0,$ and $P \rightarrow 0$, as $z \rightarrow -\infty$, one gets,
\begin{equation}
\lambda \ln J(y,t)=\int_{-\infty}^{y}\rho_0(u-u_0)\mathrm{d}y'+\int_{0}^{t}\left(P+\frac{|\bm{h}|^2}{8\pi}\right)\mathrm{d}\tau\geq \int_{-\infty}^{y}\rho_0(u-u_0)\mathrm{d}y',	\label{3.7}
\end{equation}
where the nonnegativity of $P$ has been used. By Lemma \ref{leme0}, it follows from the H\"older inequality that
$$\left|\int_{-\infty}^{y}\rho_0(u-u_0)\mathrm{d}y'\right|\leq	
2\sqrt{2\| \rho_0 \|_1 E_0}.$$
Thanks to this and recalling (\ref{3.7}), the conclusion follows.
\end{proof}

In the rest of this section, in order to simplify the presentations,
we denote by $C$ a general positive constant, which depends only on $\mu, \lambda, \bar\rho, \|\rho_0\|_1, E_0, \|(u_0',\bm{\omega}_0')\|_2+\|(\bm{h}_0,P_0)\|_{H^1},$ and $T$, is continuous in $T\in[0,\infty)$, and is finite for any finite $T$.

The next lemma gives the estimate on $\bm{\omega}$.

\begin{lemma}\label{lemom1}
It holds that
$$
\sup_{0\leq t \leq T}\|\sqrt{\rho_0}\bm{\omega}\|_{2}^2+ \int_{0}^{T}\left\|\frac{\bm{\omega}_y}{\sqrt{J}}\right\|_{2}^2\mathrm{d}t \leq C.
$$
\end{lemma}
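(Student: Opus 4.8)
The plan is to recycle the two intermediate identities already obtained in the proof of Lemma \ref{leme0} rather than start from scratch. Recall from there that multiplying $(\ref{mhd})_3$ by $\bm{\omega}$ and integrating by parts yields
\begin{equation*}
\frac{1}{2}\frac{d}{dt}\|\sqrt{\rho_0}\bm{\omega}\|_2^2+\mu\left\|\frac{\bm{\omega}_y}{\sqrt J}\right\|_2^2=-\frac{1}{4\pi}\int_{\mathbb R}\bm{\omega}_y\cdot\bm{h}\,\mathrm dy,
\end{equation*}
and that Lemma \ref{leme0} itself, together with the non-negativity of $P$ and positivity of $J$, gives the pointwise-in-time bound $\|\sqrt J\,\bm{h}\|_2^2\le 8\pi E_0$. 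The whole lemma will then follow by a Gr\"onwall/absorption argument on the displayed differential inequality; the one nontrivial point is the treatment of the coupling term on the right.

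The key step is to rewrite the coupling integral using the weight $\sqrt J$, namely
\begin{equation*}
\left|\frac{1}{4\pi}\int_{\mathbb R}\bm{\omega}_y\cdot\bm{h}\,\mathrm dy\right|=\left|\frac{1}{4\pi}\int_{\mathbb R}\frac{\bm{\omega}_y}{\sqrt J}\cdot\big(\sqrt J\,\bm{h}\big)\,\mathrm dy\right|\le\frac{1}{4\pi}\left\|\frac{\bm{\omega}_y}{\sqrt J}\right\|_2\big\|\sqrt J\,\bm{h}\big\|_2\le\frac{\sqrt{8\pi E_0}}{4\pi}\left\|\frac{\bm{\omega}_y}{\sqrt J}\right\|_2,
\end{equation*}
so that, by Young's inequality, this is bounded by $\frac{\mu}{2}\big\|\frac{\bm{\omega}_y}{\sqrt J}\big\|_2^2+\frac{E_0}{4\pi\mu}$. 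Note that this maneuver deliberately avoids any appeal to an upper bound on $J$ — only the basic energy identity of Lemma \ref{leme0} is used — which is important because at this stage of the argument only the lower bound $\underline J$ from Lemma \ref{lj} is available.

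Plugging this back in and absorbing the $\frac{\mu}{2}\big\|\frac{\bm{\omega}_y}{\sqrt J}\big\|_2^2$ term into the left-hand side yields
\begin{equation*}
\frac{d}{dt}\|\sqrt{\rho_0}\bm{\omega}\|_2^2+\mu\left\|\frac{\bm{\omega}_y}{\sqrt J}\right\|_2^2\le\frac{E_0}{2\pi\mu}.
\end{equation*}
Integrating over $(0,t)$ for $t\le T$ and using the initial bound $\|\sqrt{\rho_0}\bm{\omega}_0\|_2^2<\infty$ from the hypotheses of Theorem \ref{thm} gives
\begin{equation*}
\sup_{0\le t\le T}\|\sqrt{\rho_0}\bm{\omega}\|_2^2+\mu\int_0^T\left\|\frac{\bm{\omega}_y}{\sqrt J}\right\|_2^2\,\mathrm dt\le\|\sqrt{\rho_0}\bm{\omega}_0\|_2^2+\frac{E_0 T}{2\pi\mu}\le C,
\end{equation*}
which is the assertion. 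In short, there is no real obstacle here: the estimate is essentially the $\bm{\omega}$-part of the basic energy identity, the only care needed being to pair $\bm{\omega}_y$ with $\bm{h}$ through the factor $\sqrt J$ so that the energy bound on $\sqrt J\,\bm{h}$ can be used and then absorbed.
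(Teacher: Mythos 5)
Your proposal is correct and follows essentially the same route as the paper: multiply $(\ref{mhd})_3$ by $\bm{\omega}$, pair $\bm{\omega}_y/\sqrt J$ with $\sqrt J\,\bm{h}$ via Cauchy--Schwarz, absorb the half of $\mu\|\bm{\omega}_y/\sqrt J\|_2^2$, and control $\|\sqrt J\,\bm{h}\|_2^2$ by the basic energy identity of Lemma \ref{leme0}. The only cosmetic difference is that you insert the bound $\|\sqrt J\,\bm{h}\|_2^2\le 8\pi E_0$ before integrating in time, while the paper keeps that term on the right and then invokes Gr\"onwall; the two are equivalent here.
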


\begin{proof}
Multiplying $(\ref{mhd})_3$ with $\bm{\omega}$ and integrating over $\mathbb{R}$, it follows from integration by parts and the Cauchy inequality that
\begin{equation*}
\frac{1}{2}\frac{d}{dt}\|\sqrt{\rho_0}\bm{\omega}\|_{2}^2+\mu\left\|\frac{\bm{\omega}_y}{\sqrt{J}}\right\|_{2}^2
=-\frac{1}{4\pi}\int_{}\bm{h}\cdot\bm{\omega}_y\mathrm{d}y
\leq \frac{\mu}{2}\left\|\frac{\bm{\omega}_y}{\sqrt{J}}\right\|_{2}^2+C\|\sqrt{J}\bm{h}\|_{2}^2
\end{equation*}
and, thus,
$$
\frac{d}{dt}\|\sqrt{\rho_0}\bm{\omega}\|_{2}^2+\mu\left\|\frac{\bm{\omega}_y}{\sqrt{J}}\right\|_{2}^2
\leq C\|\sqrt{J}\bm{h}\|_{2}^2,
$$
from which, applying the Gronwall inequality and by Lemma \ref{leme0}, the conclusion follows.
\end{proof}

The following lemma gives the estimate on the ``transverse effective viscous flux"
\begin{equation*}
	\bm{F}:=\mu\frac{\bm{\omega}_y}{J}+\frac{\bm{h}}{4\pi}.
\end{equation*}

\begin{lemma}\label{lemtr}
It holds that
$$
\sup_{0\leq t \leq T}\|\sqrt{J}\bm{F}\|_{2}^2+ \int_{0}^{T}\left(
\left\|\frac{\bm{F}_y}{\sqrt{\rho_0}}\right\|_{2}^2+\|\bm{F}\|_{\infty}^4\right)\mathrm{d}t\leq C.
$$
\end{lemma}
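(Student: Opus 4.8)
The plan is to derive the evolution equation for $\bm{F}$, test it against $J\bm{F}$, and close the estimate using only the basic energy identity (Lemma \ref{leme0}), the lower bound on $J$ (Lemma \ref{lj}), and the estimate on $\bm\omega$ (Lemma \ref{lemom1}). First I would verify the PDE satisfied by $\bm{F}$: differentiating $\bm{F}=\mu\bm\omega_y/J+\bm h/4\pi$ in time, using $(\ref{mhd})_3$ to express $\mu(\bm\omega_y/J)_y$, using $(\ref{mhd})_4$ for $\bm h_t$, and using $J_t=u_y$, one obtains
\begin{equation*}
\bm{F}_t-\frac{\mu}{J}\left(\frac{\bm{F}_y}{\rho_0}\right)_y=-\frac{u_y}{J}\bm{F}+\frac{1}{4\pi}\frac{\bm\omega_y}{J},
\end{equation*}
as already recorded in Section 2. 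The key structural point is that $\mu\bm\omega_y/J=\rho_0(\mu\bm\omega_y/(\rho_0 J))$, so $\mu(\bm\omega_y/J)_y/\rho_0$ is, up to the $\bm h$ term, of the form $(\bm F_y/\rho_0)_y$; this is what makes $\bm F$ the natural ``transverse effective viscous flux."

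Next I would test this equation with $J\bm F$ and integrate over $\mathbb R$. The time-derivative term gives $\frac{d}{dt}\int J|\bm F|^2\,\mathrm dy$ after using $J_t=u_y$ (the $u_y|\bm F|^2$ contribution combines with the first term on the right-hand side). The diffusion term, after integration by parts, produces $\mu\int |\bm F_y|^2/\rho_0\,\mathrm dy$; here one must be slightly careful since $\rho_0$ may vanish — but one only needs $\bm F_y/\sqrt{\rho_0}\in L^2$, which is exactly the dissipation the identity delivers, and the boundary terms at $\pm\infty$ vanish by the decay/integrability stated in the solution class. The remaining source term $\frac{1}{4\pi}\int \bm\omega_y\cdot\bm F\,\mathrm dy$ I would bound by writing $\bm\omega_y/\sqrt J\cdot\sqrt J\bm F$ and applying Cauchy–Schwarz: it is controlled by $C\|\bm\omega_y/\sqrt J\|_2^2+\tfrac18\|\sqrt J\bm F\|_2^2$ (or, alternatively, absorbed partly into the good term after using $\bm F=\mu\bm\omega_y/J+\bm h/4\pi$). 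The upshot is a differential inequality
\begin{equation*}
\frac{d}{dt}\|\sqrt J\bm F\|_2^2+\mu\left\|\frac{\bm F_y}{\sqrt{\rho_0}}\right\|_2^2\le C\left\|\frac{\bm\omega_y}{\sqrt J}\right\|_2^2+C\|\sqrt J\bm F\|_2^2,
\end{equation*}
and since $\int_0^T\|\bm\omega_y/\sqrt J\|_2^2\,\mathrm dt\le C$ by Lemma \ref{lemom1}, Gronwall's inequality gives the $L^\infty(0,T;L^2)$ bound on $\sqrt J\bm F$ together with the space-time bound on $\bm F_y/\sqrt{\rho_0}$. (The initial value $\|\sqrt J\bm F\|_2^2|_{t=0}$ is finite because $J_0\equiv1$, $\bm\omega_0'\in L^2$, $\bm h_0\in H^1\hookrightarrow L^\infty$, and $\rho_0$ is bounded.)

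Finally, for the $\int_0^T\|\bm F\|_\infty^4\,\mathrm dt$ bound I would use a Gagliardo–Nirenberg / Sobolev interpolation in one dimension: $\|\bm F\|_\infty^2\le C\|\bm F\|_2\|\bm F_y\|_2+C\|\bm F\|_2^2$ (or the cleaner $\|\bm F\|_\infty^2\le C\|\bm F\|_2\|\bm F_y\|_2$ when $\bm F$ decays). Using the lower bound $J\ge\underline J$ to pass between $\|\sqrt J\bm F\|_2$ and $\|\bm F\|_2$, and between $\|\bm F_y/\sqrt{\rho_0}\|_2$ and $\|\bm F_y\|_2$ via $\rho_0\le\bar\rho$, one gets $\|\bm F\|_\infty^4\le C\|\bm F\|_2^2\|\bm F_y\|_2^2\le C\sup_t\|\sqrt J\bm F\|_2^2\cdot\|\bm F_y/\sqrt{\rho_0}\|_2^2$, which is integrable in $t$ by the two bounds just obtained. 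The main obstacle, and the place requiring genuine care rather than routine computation, is the handling of the weight $\rho_0$ in the diffusion term: one cannot divide by $\rho_0$ pointwise, so the integration by parts and the passage to the $L^\infty$ estimate must be organized entirely in terms of the weighted quantity $\bm F_y/\sqrt{\rho_0}$, and one must check that $\|\bm F_y\|_2$ is nonetheless controlled (which it is, since $\rho_0\le\bar\rho$ forces $\|\bm F_y\|_2^2\le\bar\rho\,\|\bm F_y/\sqrt{\rho_0}\|_2^2$). Once this is arranged, all estimates follow from the three preceding lemmas.
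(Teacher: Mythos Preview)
Your overall strategy---derive the evolution equation for $\bm{F}$, test with $J\bm{F}$, close by Gronwall using Lemma \ref{lemom1}, then interpolate for the $L^\infty$ bound---matches the paper's. But there is a genuine gap in the energy identity step.

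You claim that after using $J_t=u_y$, the $u_y|\bm{F}|^2$ contribution from the time derivative ``combines with the first term on the right-hand side,'' leaving only the source $\frac{1}{4\pi}\int\bm{\omega}_y\cdot\bm{F}\,\mathrm dy$. This is not correct. The time derivative produces $\frac{1}{2}\partial_t(J|\bm{F}|^2)-\frac{1}{2}u_y|\bm{F}|^2$, while the right-hand side term $-\frac{u_y}{J}\bm{F}\cdot J\bm{F}$ gives $-u_y|\bm{F}|^2$. These combine to leave a residual $-\frac{1}{2}\int u_y|\bm{F}|^2\,\mathrm dy$ on the right, exactly as in the paper's identity
\[
\frac{1}{2}\frac{d}{dt}\|\sqrt{J}\bm{F}\|_{2}^2+\mu \left\| \frac{\bm{F}_y}{\sqrt{\rho_0}}\right\|_{2}^2
=-\frac{1}{2}\int u_y |\bm{F}|^2 \,\mathrm{d}y+\frac{1}{4\pi}\int\bm{\omega}_y\cdot \bm{F} \,\mathrm{d}y.
\]
This residual term cannot be dropped, and at this stage you have no bound on $\|u_y\|_p$ or on $\int_0^T\|u_y/\sqrt{J}\|_2^2\,\mathrm dt$ (the basic energy identity is a conservation law; the viscous dissipation cancels with the heating in $(\ref{mhd})_5$). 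The paper handles it by integrating by parts, $\int u_y|\bm{F}|^2\,\mathrm dy=-2\int u\,\bm{F}\cdot\bm{F}_y\,\mathrm dy$, and then estimating via $\|\sqrt{\rho_0}u\|_2\,\|\bm{F}_y/\sqrt{\rho_0}\|_2\,\|\bm{F}\|_\infty$, using the energy bound on $\|\sqrt{\rho_0}u\|_2$ and your interpolation $\|\bm{F}\|_\infty^2\le C\|\sqrt{J}\bm{F}\|_2\|\bm{F}_y/\sqrt{\rho_0}\|_2$. After Young's inequality this yields $\frac{\mu}{2}\|\bm{F}_y/\sqrt{\rho_0}\|_2^2+C\|\sqrt{J}\bm{F}\|_2^2$, which can be absorbed and gives exactly the differential inequality you wrote down. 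So your final inequality is right, but the step that produces the $C\|\sqrt{J}\bm{F}\|_2^2$ term on the right is this integration-by-parts trick, not a cancellation; without it the argument does not close.
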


\begin{proof}
Recalling the definition of $\bm{F}$ and by direct calculations, one derives from $(\ref{mhd})_1, (\ref{mhd})_3,$ and $(\ref{mhd})_4$ that
%\begin{equation*}
%\begin{aligned}
%\bm{F}_t &=\mu\left(\frac{\bm{\omega}_{yt}}{J}-\frac{J_t \bm{\omega}_y}{J^2}\right)+\frac{\bm{h}_t}{4\pi} =\frac{\mu}{J}\left(\frac{\bm{F}_y}{\rho_0}\right)_y-\mu\frac{u_y\bm{\omega}_y}{J^2}+\frac{1}{4\pi}
%\left(\frac{\bm{\omega}_y}{J}-\frac{u_y}{J}\bm{h}\right),		\end{aligned}
%\end{equation*}		
%that is,
\begin{equation*}
\bm{F}_t-\frac{\mu}{J}\left(\frac{\bm{F}_y}{\rho_0}\right)_y=-\frac{u_y}{J}\bm{F}+\frac{1}{4\pi}\frac{\bm{\omega}_y}{J}. 	
\end{equation*}
Multiplying this with $J\bm{F}$, integrating over $\mathbb{R}$, and integrating by parts yield
\begin{equation}\label{fes}
\begin{aligned}
&\frac{1}{2}\frac{d}{dt}\|\sqrt{J}\bm{F}\|_{2}^2+\mu \left\| \frac{\bm{F}_y}{\sqrt{\rho_0}}\right\|_{2}^2\\
=&-\frac{1}{2}\int_{} u_y |\bm{F}|^2 \mathrm{d}y+\frac{1}{4\pi}\int_{}\bm{\omega}_y\cdot \bm{F} \mathrm{d}y=\int_{} u {\bm{F}_y\cdot\bm{F}} \mathrm{d}y+\frac{1}{4\pi}\int_{} {\bm{\omega}_y}
\cdot\bm{F} \mathrm{d}y\\
\leq&\|\sqrt{\rho_0}u\|_2\left\| \frac{\bm{F}_y}{\sqrt{\rho_0}}\right\|_{2}\|\bm{F}\|_\infty+\frac{1}{4\pi}
\left\|\frac{\bm{\omega}_y}{\sqrt J}\right\|_2\|\sqrt J\bm{F}\|_2\\
\leq&C\left\| \frac{\bm{F}_y}{\sqrt{\rho_0}}\right\|_{2}\|\bm{F}\|_\infty+C
\left\|\frac{\bm{\omega}_y}{\sqrt J}\right\|_2\|\sqrt J\bm{F}\|_2,
\end{aligned}	
\end{equation}
where Lemma \ref{leme0} was used.
Recalling that $J\geq\underline{ J}$, it follows
\begin{equation}\label{fin}
	\|\bm{F}\|_{\infty}^2\leq \int \left|\partial_y|\bm{F}|^2\right|dy \leq 2\|\bm{F}\|_{2}\|\bm{F}_y\|_{2}\leq C\|\sqrt{J}\bm{F}\|_{2}\left\| \frac{\bm{F}_y}{\sqrt{\rho_0}}\right\|_{2}.
\end{equation}
Plugging the above estimate into $(\ref{fes})$, one deduces by the Young inequality that
\begin{equation*}\label{3.23}
\begin{aligned}
 \frac{1}{2}\frac{d}{dt}\|\sqrt{J}\bm{F}\|_{2}^2+\mu \left\| \frac{\bm{F}_y}{\sqrt{\rho_0}}\right\|_{2}^2
\leq&C\left\|\frac{\bm{F}_y}{\sqrt{\rho_0}}\right\|_{2}^{\frac{3}{2}}\|\sqrt{J}\bm{F}\|_{2}^{\frac{1}{2}}+\frac{1}{4\pi}\Big\|\frac{\bm{\omega}_y}{\sqrt{J}}\Big\|_{2}\|\sqrt{J}\bm{F}\|_{2}\\
\leq& \frac{\mu}{2}\left\| \frac{\bm{F}_y}{\sqrt{\rho_0}}\right\|_{2}^2 +C\left(\left\|\frac{\bm{\omega}_y}{\sqrt{J}}\right\|_{2}^2+\|\sqrt{J}\bm{F}\|_{2}^2\right)
\end{aligned}
\end{equation*}
and, thus,
$$
\frac{d}{dt}\|\sqrt{J}\bm{F}\|_{2}^2+\mu \left\| \frac{\bm{F}_y}{\sqrt{\rho_0}}\right\|_{2}^2 \leq C\left(\left\|\frac{\bm{\omega}_y}{\sqrt{J}}\right\|_{2}^2+\|\sqrt{J}\bm{F}\|_{2}^2\right),
$$
from which, by the Gronwall inequality, and applying Lemma \ref{lemom1}, it follows
$$
\sup_{0\leq t \leq T}\|\sqrt{J}\bm{F}\|_{2}^2+ \int_{0}^{T}
\left\|\frac{\bm{F}_y}{\sqrt{\rho_0}}\right\|_{2}^2 \mathrm{d}t\leq C.
$$
The estimate for $\int_{0}^{T} \|\bm{F}\|_{\infty}^4 \mathrm{d}t$ follows from the above inequality by using (\ref{fin}).
\end{proof}

As a straightforward consequence of Lemma \ref{leme0} and Lemma \ref{lemtr}, one has the following:

\begin{corollary}
  \label{cor3.1}
It holds that
$$
\sup_{0\leq t\leq T}\left\|\frac{\bm{\omega}_y}{\sqrt{J}}\right\|_2\leq C.
$$
\end{corollary}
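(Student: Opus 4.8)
The plan is to exploit the algebraic identity that \emph{defines} the transverse effective viscous flux, rather than any differential inequality. Recall $\bm{F}=\mu\frac{\bm{\omega}_y}{J}+\frac{\bm{h}}{4\pi}$, so that
$$
\mu\frac{\bm{\omega}_y}{J}=\bm{F}-\frac{\bm{h}}{4\pi},\qquad\text{hence}\qquad
\frac{\bm{\omega}_y}{\sqrt J}=\frac{\sqrt J}{\mu}\left(\bm{F}-\frac{\bm{h}}{4\pi}\right).
$$
Taking $L^2$ norms and using the triangle inequality, this gives the pointwise-in-time bound
$$
\left\|\frac{\bm{\omega}_y}{\sqrt J}\right\|_2\le\frac{1}{\mu}\|\sqrt J\,\bm{F}\|_2+\frac{1}{4\pi\mu}\|\sqrt J\,\bm{h}\|_2 .
$$

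It then remains to bound the two terms on the right uniformly in $t\in(0,T)$. The first is controlled directly by Lemma \ref{lemtr}, which gives $\sup_{0\le t\le T}\|\sqrt J\,\bm{F}\|_2^2\le C$. For the second, I would appeal to the basic energy identity of Lemma \ref{leme0}: since $\frac{J|\bm{h}|^2}{8\pi}$ is one of the nonnegative summands in the conserved energy, one has $\|\sqrt J\,\bm{h}\|_2^2=\int_{\mathbb{R}}J|\bm{h}|^2\,\mathrm{d}y\le 8\pi E_0$ for all $t\in(0,T)$. Substituting both bounds into the displayed inequality yields $\sup_{0\le t\le T}\|\bm{\omega}_y/\sqrt J\|_2\le C$, with $C$ depending only on the quantities already fixed for the constant $C$ (in particular $\mu$, $E_0$).

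There is essentially no obstacle: the statement is purely algebraic once Lemmas \ref{leme0} and \ref{lemtr} are in hand, with no Gronwall step or further estimation required. The only minor care needed is in tracking the constants $\mu^{-1}$ and $8\pi E_0$, but these are harmlessly absorbed into $C$ by the conventions already established after Lemma \ref{lj}.
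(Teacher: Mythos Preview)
Your proof is correct and matches the paper's approach: the paper presents this corollary without an explicit proof, stating only that it is a ``straightforward consequence of Lemma \ref{leme0} and Lemma \ref{lemtr},'' and your algebraic argument via the identity $\frac{\bm{\omega}_y}{\sqrt J}=\frac{\sqrt J}{\mu}\bigl(\bm{F}-\frac{\bm{h}}{4\pi}\bigr)$ is precisely how one unpacks that remark.
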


The following lemma gives the higher integrability of the transverse magnetic field
and the effective
viscous flux
$$
	H:=|\bm{h}|^2, \qquad G:=\lambda \frac{u_y}{J}-P-\frac{\bm{|h|^2}}{8\pi}
=\lambda \frac{u_y}{J}-P-\frac{H}{8\pi}.
$$

\begin{lemma}\label{lemg}
It holds that
$$
\sup_{0\leq t \leq T}\left(\|\bm{h}\|_{4}^4+\|\sqrt JG\|_{2}^2\right)+\int_{0}^{T}\left(\left\|\frac{G_y}{\sqrt{\rho_0}}\right\|_{2}^2
+\|\bm{h}\|_{6}^6 +\|\sqrt{P}|\bm{h}|^2\|_{2}^2\right)\mathrm{d}t
\leq  C
$$
and
$$
 \int_{0}^{T}\|G\|_{\infty}^4\mathrm{d}t\leq C.
$$
\end{lemma}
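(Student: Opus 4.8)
The plan is to derive a closed differential inequality for the combined quantity $\|\bm h\|_4^4 + \|\sqrt J G\|_2^2$, using Lemmas \ref{leme0}, \ref{lj}, \ref{lemom1}, \ref{lemtr} and Corollary \ref{cor3.1} together with the equation \eqref{EQG} satisfied by $G$. First I would handle the magnetic part. From $(\ref{mhd})_4$, namely $\bm h_t + \frac{u_y}{J}\bm h = \frac{\bm\omega_y}{J}$, multiply by $J|\bm h|^2\bm h$ and integrate; using $J_t = u_y$ one gets, schematically,
\begin{equation*}
\frac{1}{4}\frac{d}{dt}\int J|\bm h|^4\,dy + \frac{3}{4}\int u_y|\bm h|^4\,dy = \int \bm\omega_y\cdot\bm h\,|\bm h|^2\,dy.
\end{equation*}
Then $u_y = \frac{1}{\lambda}(G + P + \frac{H}{8\pi})J$, so the term $\int u_y|\bm h|^4$ contributes a good nonnegative piece $\frac{3}{4\lambda}\int J|\bm h|^6/(8\pi)$ plus terms $\int JG|\bm h|^4$ and $\int JP|\bm h|^4$; the right-hand side is bounded by $\|\frac{\bm\omega_y}{\sqrt J}\|_2\|\sqrt J|\bm h|^2\|_2\|\bm h\|_\infty$, and $\|\bm h\|_\infty^2 \le C\|\bm h\|_2\|\bm h_y\|_2$, with $\bm h_y$ controlled via $\bm h_y = \frac{1}{\mu}(J\bm F - \mu\bm\omega_y)\cdot$(something)$_y$… more precisely through $\bm F = \mu\frac{\bm\omega_y}{J} + \frac{\bm h}{4\pi}$ and Lemma \ref{lemtr}, which gives $\int_0^T\|\bm F\|_\infty^4\,dt \le C$.

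For the $G$-part, test \eqref{EQG} with $JG$. Integration by parts on the diffusion term gives $-\int \frac{\lambda}{J}(\frac{G_y}{\rho_0})_y JG = \lambda\|\frac{G_y}{\sqrt{\rho_0}}\|_2^2 + \int \frac{\lambda u_y G_y G}{\rho_0 J}$; the latter, using $u_y = \frac{J}{\lambda}(G+P+\frac H{8\pi})$ and $\rho_0 = \rho J \le \bar\rho\, J$, absorbs into $\frac\lambda 4\|\frac{G_y}{\sqrt{\rho_0}}\|_2^2$ plus lower-order terms after Gagliardo–Nirenberg. The source terms in \eqref{EQG}, once multiplied by $JG$, become: $-\gamma\int u_y G^2$ (handle as in \cite{li20,lixin17} via the energy bound, since $|u_y| \le \frac J\lambda(|G|+P+\frac H{8\pi})$), the term $\frac{2-\gamma}{8\pi}\int u_y|\bm h|^2 G$, the term $-(\gamma-1)\mu\int|\frac{\bm\omega_y}{J}|^2 JG$, and $-\frac{1}{4\pi}\int \frac{\bm h\cdot\bm\omega_y}{J}JG$. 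The crucial observation is that $\frac{\bm\omega_y}{J} = \frac{1}{\mu}(\bm F - \frac{\bm h}{4\pi})$, so everywhere $\bm\omega_y$ appears it can be replaced by $\bm F$ and $\bm h$, for which we have the $L^\infty(0,T;L^2)$ and $L^4(0,T;L^\infty)$ estimates of Lemma \ref{lemtr} and the $L^4$ estimate on $\bm h$ being derived in parallel. Each bad integral is then estimated by Hölder, Gagliardo–Nirenberg ($\|G\|_\infty^2 \le C\|\sqrt J G\|_2\|\frac{G_y}{\sqrt{\rho_0}}\|_2$, using $J \ge \underline J$ and $\rho_0 \le \bar\rho J$), and Young, peeling off a small multiple of $\|\frac{G_y}{\sqrt{\rho_0}}\|_2^2$ and a small multiple of $\|\bm h\|_6^6$.

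Adding the two estimates, one arrives at
\begin{equation*}
\frac{d}{dt}\Big(\|\bm h\|_4^4 + \|\sqrt J G\|_2^2\Big) + c\Big(\Big\|\frac{G_y}{\sqrt{\rho_0}}\Big\|_2^2 + \|\bm h\|_6^6\Big) \le C\varphi(t)\Big(\|\bm h\|_4^4 + \|\sqrt J G\|_2^2\Big) + C\psi(t),
\end{equation*}
where $\varphi, \psi \in L^1(0,T)$ are built from $\|\frac{\bm\omega_y}{\sqrt J}\|_2^2$, $\|\bm F\|_\infty^4$, and $\|\sqrt J\bm F\|_2^2$, all integrable in time by Lemmas \ref{lemom1} and \ref{lemtr}. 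Gronwall then closes the main estimate; the bound on $\int_0^T\|\sqrt P|\bm h|^2\|_2^2\,dt$ comes out as a byproduct of the magnetic estimate (the $\int JP|\bm h|^4$ term, rewritten via $P = \frac{\tilde P}{R}$ and $P$ bounded in the energy sense, actually $\|\sqrt P |\bm h|^2\|_2^2 = \int P|\bm h|^4$, which is exactly the term produced), and $\int_0^T\|G\|_\infty^4\,dt \le C$ follows from $\|G\|_\infty^4 \le C\|\sqrt J G\|_2^2\|\frac{G_y}{\sqrt{\rho_0}}\|_2^2$ together with the now-established bounds. The main obstacle is bookkeeping the pressure term $\int JP|\bm h|^4$ and the cross terms $\int u_y|\bm h|^2 G$ and $\int JG^3$-type contributions: one must check that the positive production terms ($\|\bm h\|_6^6$ from the magnetic equation and $\|\frac{G_y}{\sqrt{\rho_0}}\|_2^2$ from the $G$-equation) genuinely dominate them after Gagliardo–Nirenberg, with no circular dependence — this is where replacing $\bm\omega_y$ by $\bm F$ (whose $L^4_tL^\infty_x$ norm is already controlled) is essential rather than trying to control $\bm\omega_y$ in $L^\infty_x$ directly.
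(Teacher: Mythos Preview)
Your proposal follows essentially the same route as the paper: derive coupled evolution inequalities for $\|\sqrt J\,|\bm h|^2\|_2^2$ and $\|\sqrt J G\|_2^2$, replace every occurrence of $\frac{\bm\omega_y}{J}$ by $\frac{1}{\mu}\bigl(\bm F-\frac{\bm h}{4\pi}\bigr)$ so that the already-established bounds on $\bm F$ from Lemma~\ref{lemtr} can be used, control $\|G\|_\infty$ via the Gagliardo--Nirenberg inequality $\|G\|_\infty^2\le C\|\sqrt J G\|_2\|G_y/\sqrt{\rho_0}\|_2$, and close with Gronwall.

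Two small slips to fix. First, when you test the diffusion term against $JG$, the $J$'s cancel \emph{before} integration by parts: $-\int \frac{\lambda}{J}\bigl(\frac{G_y}{\rho_0}\bigr)_y JG\,dy = -\lambda\int\bigl(\frac{G_y}{\rho_0}\bigr)_y G\,dy = \lambda\|G_y/\sqrt{\rho_0}\|_2^2$, so your extra cross term $\int \frac{\lambda u_y G_y G}{\rho_0 J}$ does not appear (the $\frac12\int u_y G^2$ that does appear comes from $\partial_t(JG^2)$ and combines with the $-\gamma\int u_y G^2$ source term). Second, your first attempt at the magnetic right-hand side, bounding $\int\bm\omega_y\cdot\bm h\,|\bm h|^2$ via $\|\bm h\|_\infty$ and then $\|\bm h_y\|_2$, is circular since $\|\bm h_y\|_2$ is only obtained later in Lemma~\ref{lemhy}; you must go directly to your ``more precisely'' substitution $\frac{\bm\omega_y}{J}=\frac{1}{\mu}(\bm F-\frac{\bm h}{4\pi})$ \emph{before} applying H\"older, which yields a good sign term $-\frac{1}{4\pi\mu}\int J|\bm h|^4$ plus $\frac{1}{\mu}\int J\bm F\cdot\bm h\,|\bm h|^2\le \epsilon\int J|\bm h|^6+C_\epsilon\|\sqrt J\bm F\|_2^2$, exactly as in the paper.
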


\begin{proof}
By the definitions of $G, H,$ and $\bm{F}$, it follows from $(\ref{mhd})_4$ that
\begin{equation*}
\bm{h}_t=\frac{\bm{\omega}_y}{J}-\bm{h}\frac{u_y}{J}=\frac1\mu\left( \bm{F} -\frac{\bm{h}}{4\pi}\right)-\frac{\bm{h}}{\lambda}\left(G+P+\frac{H}{8\pi}\right).	
\end{equation*}
Therefore, we could obtain
\begin{equation*}
H_t=2\bm{h}\cdot\bm{h}_t=\frac{2}{\mu}\left(\bm{F}-\frac{\bm{h}}{4\pi}\right)\cdot\bm{h}-\frac{2H}{\lambda}
\left(G+P+\frac{H}{8\pi}\right),	
\end{equation*}
that is
\begin{equation}\label{he}
H_t+\frac{H^2}{4\pi \lambda}+\frac{H}{2\pi \mu}+\frac{2H P}{\lambda}=\frac{2}{\mu}\bm{F}\cdot\bm{h}-\frac{2HG}{\lambda}	.
\end{equation}
Multiplying $(\ref{he})$ with $JH$ and integrating over $\mathbb{R}$ yield
\begin{equation}\label{he2}
\begin{aligned}
\frac{1}{2}\frac{d}{dt}\int_{}JH^2\mathrm{d}y+&\int_{}\left(\frac{1}{4\pi\lambda}JH^3+\frac{1}{2\pi\mu}JH^2+\frac{2}{\lambda}J P H^2 \right)\mathrm{d}y\\
=\frac{1}{2}&\int_{}u_yH^2\mathrm{d}y+\int_{}\left(\frac{2}{\mu}J H\bm{F}\cdot\bm{h}-\frac{2}{\lambda}J G H^2 \right)\mathrm{d}y.
\end{aligned}
\end{equation}
The definition of $G$ yields
\begin{equation}\label{he3}
\frac{1}{2}\int_{}u_y H^2\mathrm{d}y=\frac{1}{2\lambda}\int_{}J H^2 \left(G+P+\frac{H}{8\pi}\right)dy.	
\end{equation}
Inserting $(\ref{he3})$ into $(\ref{he2})$ leads to
\begin{equation}\label{he4}
\begin{aligned}
\frac{1}{2}\frac{d}{dt}\|\sqrt JH\|_2^2+&\int_{}\left(\frac{3}{16\pi\lambda}JH^3+\frac{1}{2\pi\mu}JH^2+\frac{3}{2\lambda}J P H^2 \right)\mathrm{d}y\\
=&\frac{2}{\mu}\int_{} JH\bm{F}\cdot\bm{h}\mathrm{d}y-\frac{3}{2\lambda}\int J G H^2 \mathrm{d}y.
\end{aligned}	
\end{equation}
By the Cauchy inequality, it follows
\begin{equation}\label{he5}
\left|\int_{}JH\bm{F}\cdot\bm{h}dy\right| \leq \epsilon \int_{}JH^3\mathrm{d}y+C_\epsilon\int_{}J|\bm{F}|^2\mathrm{d}y	,
\end{equation}
and
\begin{equation}\label{he6}
\left|\int_{}JG H^2 \mathrm{d}y\right|\leq \epsilon \int_{}JH^3\mathrm{d}y+C_\varepsilon
\int_{} J|\bm{h}|^2G^2 \mathrm{d}y.	
\end{equation}
Taking a suitable $\epsilon$, plugging $(\ref{he5})$--$(\ref{he6})$ into $(\ref{he4})$, it follows from Lemma \ref{leme0} and Lemma \ref{lemtr} that
\begin{equation}\label{h2e}
\begin{aligned}
\frac{d}{dt}\|\sqrt{J}H\|_{2}^2+\frac{1}{4\pi\lambda}\|\sqrt{J}H^{\frac{3}{2}}\|_{2}^2
+\frac{1}{\pi\mu}\|\sqrt{J}H\|_{2}^2+\frac{3}{\lambda}\|\sqrt{JP}H\|_{2}^2&\\
\leq C(\|\sqrt J\bm{h}\|_{2}^2\|G\|_{\infty}^2+\|\sqrt{J}\bm{F}\|_{2}^2)	\leq C(1+\|G\|_\infty^2)&.
\end{aligned}
\end{equation}

%Noticing that $\rho_0 u_t=G_y$ and using $(\ref{mhd})$, we have
%\begin{equation*}
%\begin{aligned}
%G_t=&\lambda\left(\frac{u_{ty}}{J}-\frac{(u_y)^2}{J^2}\right)-P_t-\frac{\bm{h}\cdot\bm{h}_t}{4\pi}\\
%=&\lambda\left(\frac{1}{J}\left(\frac{G_y}{\rho_0}\right)_y-\left|\frac{u_y}{J}\right|^2\right)+\frac{1}{4\pi}\left(|\bm{h}|^2 \frac{u_y}{J}-\frac{\bm{h}\cdot\bm{\omega}_y}{J}\right)\\
%&+\gamma P\frac{u_y}{J}-(\gamma-1)\left(\lambda\left|\frac{u_y}{J}\right|^2+\mu\left|\frac{\bm{\omega}_y}{J}\right|^2\right).	\end{aligned}	
%\end{equation*}
%Then we obtain
By direct calculations, one can check from (\ref{mhd}) that
\begin{equation}\label{ge}
G_t-\frac{\lambda}{J}\left(\frac{G_y}{\rho_0}\right)_y=-\gamma\frac{u_y}{J}G+
\frac{2-\gamma}{8\pi}\frac{u_y}{J}|\bm{h}|^2-(\gamma-1)\mu\Big|\frac{\bm{w}_y}{J}\Big|^2-\frac{\bm{h}\cdot\bm{\omega_y}}{4\pi J}. 	
\end{equation}
Multiplying $(\ref{ge})$ with $JG$ and integrating over $\mathbb{R}$, one gets by integration by parts that
\begin{equation}\label{ge1}
\begin{aligned}
\frac{1}{2}\frac{d}{dt}\|\sqrt JG\|_2^2+\lambda\left\|\frac{G_y}{\sqrt{\rho_0}}\right\|_2^2
=&\left(\frac{1}{2}-\gamma\right)\int u_y G^2 \mathrm{d}y
-\mu(\gamma-1 )\int_{}\left|\frac{\bm{\omega }_y}{\sqrt{J }}\right|^2 G\mathrm{d}y\\
&+\frac{2-\gamma }{8\pi}\int_{}u_yHG\mathrm{d}y-\frac{1}{4\pi}\int_{}\bm{\omega}_y\cdot \bm{h} G\mathrm{d}y.
\end{aligned}
\end{equation}
The terms on the right hand side of $(\ref{ge1})$ are estimated as follows. It follows from integration by parts, the H\"older
and Young inequalities, and Lemma \ref{leme0} that
\begin{equation}\label{ge2}
\begin{aligned}
\left|\int u_y G^2 \mathrm{d}y\right|
=&2\left|\int_{} uG G_y\mathrm{d}y\right| \leq 2\|\sqrt{\rho_0}u\|_2 \left\|\frac{G_y}{\sqrt{\rho_0}}\right\|_2\|{G}\|_{\infty}\\
\leq& \epsilon\left\|\frac{G_y}{\sqrt{\rho_0}}\right\|_2^2+C_\epsilon\|G\|_\infty^2.
\end{aligned}
\end{equation}
By Lemma \ref{leme0} and Corollary \ref{cor3.1}, one deduces
\begin{equation}\label{ge3}
\begin{aligned}
\left|\int \left|\frac{\bm{\omega }_y}{\sqrt{J }}\right|^2 G \mathrm{d}y\right|
\leq \left\|\frac{\bm{\omega}_y}{\sqrt J}\right\|_2^2\|G\|_\infty\leq C(1+\|G\|_\infty^2)
\end{aligned}
\end{equation}
and
\begin{equation}\label{ge4}
\begin{aligned}
&\left|\int \bm{\omega}_y\cdot \bm{h} G \mathrm{d}y\right|\leq \|\sqrt{J}\bm{h}\|_2 \left\|\frac{\bm{\omega}_y}{\sqrt J}\right\|_2\|{G}\|_{\infty}\leq C(1+\|{G}\|_{\infty}^2).
\end{aligned}
\end{equation}
Finally, recalling the definition of $G$, by Lemma \ref{leme0}, and using the Young inequality, one deduces
\begin{equation}\label{ge5}
\begin{aligned}
&\left|\int u_yHG \mathrm{d}y\right|
=\frac{1}{\lambda }\left|\int_{} J\Big(G+P+\frac{H}{8\pi} \Big)HG\mathrm{d}y\right|\\
%\leq&C\left(\|\sqrt J\bm{h}\|_2^2 \|{G}\|_{\infty}^2+ \|\sqrt{JP}H\|_2\|JP\|_1^{\frac12}\|G\|_\infty
%+ \int_{} J|\bm{h}|H^{\frac32}|G|\mathrm{d}y\right)\\
\leq&C\left(\|\sqrt J\bm{h}\|_2^2 \|{G}\|_{\infty}^2+ \|\sqrt{JP}H\|_2\|JP\|_1^{\frac12}\|G\|_\infty+\|\sqrt J\bm{h}\|_2\|\sqrt JH^{\frac32}\|_2\|G\|_\infty \right)\\
\leq&\epsilon\left(\|\sqrt J H^\frac32\|_2^2+ \|\sqrt{JP} H\|_2^2 \right)+C_\epsilon(1+\|{G}\|_{\infty}^2).
\end{aligned}
\end{equation}
Plugging (\ref{ge2})--(\ref{ge5}) into $(\ref{ge1})$ yields
\begin{equation}\label{gef}
\begin{aligned}
\frac{1}{2}\frac{d}{dt}\|\sqrt JG\|_2^2+\lambda\left\|\frac{G_y}{\sqrt{\rho_0}}\right\|_2^2\leq
\epsilon\left(\|\sqrt J H^\frac32\|_2^2+ \|\sqrt{JP} H\|_2^2 \right)+C_\epsilon(1+\|{G}\|_{\infty}^2)
\end{aligned}
\end{equation}
for suitably small $\epsilon$.

Adding (\ref{h2e}) with (\ref{gef}) and Choosing $\epsilon$ suitably small lead to
\begin{equation*}
  \label{ADHG}
  \begin{aligned}
    \frac{d}{dt}(\|\sqrt JH\|_2^2+\|\sqrt JG\|_2^2)&+\frac{1}{8\pi\lambda}\|\sqrt{J}H^{\frac{3}{2}}\|_{2}^2
 +\frac{1}{\lambda}\|\sqrt{JP}H\|_{2}^2+2\lambda\left\|\frac{G_y}{\sqrt{\rho_0}}\right\|_2^2\\
 \leq& C(1+\|{G}\|_{\infty}^2),
  \end{aligned}
\end{equation*}
from which noticing that
\begin{equation}\label{ginf}
	\|G\|_{\infty}^2\leq\int|\partial_y|G|^2|dy\leq 2\|G\|_{2}\|G_y\|_{2}\leq \frac{2\|\rho_0\|_{\infty}^{\frac{1}{2}}}{\sqrt{\underline J}}\|\sqrt{J}G\|_{2}\left\|\frac{G_y}{\sqrt{\rho_0}}\right\|_{2}
\end{equation}
guaranteed by $J\geq\underline J$, one obtains by the Cauchy inequality that
\begin{equation*}
  \label{ADHG}
  \begin{aligned}
    \frac{d}{dt}(\|\sqrt JH\|_2^2+\|\sqrt JG\|_2^2)&+\frac{1}{8\pi\lambda}\|\sqrt{J}H^{\frac{3}{2}}\|_{2}^2
 +\frac{1}{\lambda}\|\sqrt{JP}H\|_{2}^2+ \lambda\left\|\frac{G_y}{\sqrt{\rho_0}}\right\|_2^2\\
 \leq& C(1+\|\sqrt JG\|_2^2).
  \end{aligned}
\end{equation*}
Applying the Gronwall inequality to the above and by Lemma \ref{lj}, one gets the first conclusion.
The second conclusion follows from the first one
by using $(\ref{ginf})$.
\end{proof}

Based on Lemma \ref{lemtr} and Lemma \ref{lemg}, one can get the uniform upper bound of $(J, \bm{h}, P)$ as stated in the following lemma.

\begin{lemma}\label{lemh}
It holds that
$$
\sup_{0\leq t \leq T}(\|J\|_\infty+\|\bm{h}\|_{\infty}+\|P\|_{\infty})\leq C.
$$
\end{lemma}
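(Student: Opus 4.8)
The plan is to bound the three quantities in a definite order — first $\|\bm h\|_\infty$, then $\|P\|_\infty$, and finally $\|J\|_\infty$ — exploiting the fact that $H=|\bm h|^2$, $P$, and $\ln J$ each satisfy, for every fixed $y$, a scalar ODE in $t$ whose right-hand side, once the definitions of $G$ and $\bm F$ are used, is controlled by quantities already shown to be integrable in time.

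\emph{Step 1 ($\|\bm h\|_\infty$).} I would start from $(\ref{he})$. Since $P\ge 0$ and $H\ge 0$, the terms $\frac{H^2}{4\pi\lambda},\ \frac{H}{2\pi\mu},\ \frac{2HP}{\lambda}$ on the left are nonnegative, so for each fixed $y$ one gets the pointwise differential inequality
\[
H_t\le -\frac{H^2}{4\pi\lambda}+\frac{2}{\mu}|\bm F||\bm h|+\frac{2}{\lambda}|G|H\le -\frac{H^2}{4\pi\lambda}+\frac{2}{\mu}\|\bm F\|_\infty H^{1/2}+\frac{2}{\lambda}\|G\|_\infty H.
\]
By Young's inequality (exponents $4,\tfrac43$ for the first product, $2,2$ for the second) the last two terms are $\le \frac{H^2}{4\pi\lambda}+C\big(\|\bm F\|_\infty^{4/3}+\|G\|_\infty^2\big)$, hence $H_t\le g(t):=C\big(\|\bm F\|_\infty^{4/3}+\|G\|_\infty^2\big)$. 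Since $\int_0^T g\,dt\le C$ by H\"older's inequality together with Lemmas \ref{lemtr} and \ref{lemg}, integrating in $t$ and using $\|\bm h_0\|_\infty\le C$ (as $\bm h_0\in H^1\hookrightarrow L^\infty$) yields $\|\bm h(t)\|_\infty^2=\|H(t)\|_\infty\le C$ for all $t\in(0,T)$.

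\emph{Step 2 ($\|P\|_\infty$).} Using $\frac{u_y}{J}=\frac1\lambda\big(G+P+\frac{H}{8\pi}\big)$ in $(\ref{mhd})_5$ and expanding the square, I would rewrite the pressure equation as
\[
P_t+\frac1\lambda P^2=\frac{\gamma-2}{\lambda}\Big(G+\frac{H}{8\pi}\Big)P+\frac{\gamma-1}{\lambda}\Big(G+\frac{H}{8\pi}\Big)^2+(\gamma-1)\mu\Big|\frac{\bm\omega_y}{J}\Big|^2.
\]
The last term is handled through $\bm F$: since $\mu\frac{\bm\omega_y}{J}=\bm F-\frac{\bm h}{4\pi}$, it is bounded by $\frac2\mu\|\bm F\|_\infty^2+\frac{1}{8\pi^2\mu}\|\bm h\|_\infty^2$, which is integrable in $t$ by Lemma \ref{lemtr} and Step 1. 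Absorbing the $\big(G+\frac{H}{8\pi}\big)P$ term into $\frac1{2\lambda}P^2$ by Young's inequality and using $\|H\|_\infty\le C$ from Step 1, one arrives at $P_t+\frac1{2\lambda}P^2\le g_1(t)$ with $\int_0^T g_1\,dt\le C$ (again via H\"older and Lemmas \ref{lemtr}, \ref{lemg}); integrating in $t$ and using $\|P_0\|_\infty\le C$ gives $\|P(t)\|_\infty\le C$.

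\emph{Step 3 ($\|J\|_\infty$).} From $J_t=u_y$ and the definition of $G$, $(\ln J)_t=\frac{u_y}{J}=\frac1\lambda\big(G+P+\frac{H}{8\pi}\big)$, so since $J_0\equiv 1$, integrating gives $\ln J(y,t)=\frac1\lambda\int_0^t\big(G+P+\frac{H}{8\pi}\big)\,d\tau\le\frac1\lambda\int_0^T\big(\|G\|_\infty+\|P\|_\infty+\tfrac{1}{8\pi}\|H\|_\infty\big)\,d\tau\le C$, where $\int_0^T\|G\|_\infty\,d\tau\le CT^{3/4}\big(\int_0^T\|G\|_\infty^4\,d\tau\big)^{1/4}\le C$ by Lemma \ref{lemg}, and $\int_0^T(\|P\|_\infty+\|H\|_\infty)\,d\tau\le C$ by Steps 1--2. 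Hence $J\le e^{C}=:C$, which together with the lower bound of Lemma \ref{lj} gives $\|J\|_\infty\le C$. The main obstacle is Step 2: the pressure equation contains the term $\mu|\bm\omega_y/J|^2$, for which no pointwise bound is available; the point is to recognize that it must be rewritten via the transverse effective viscous flux $\bm F$ (whose $L^4_tL^\infty_y$ bound is precisely Lemma \ref{lemtr}) and the already-established $L^\infty$ bound on $\bm h$, and to arrange the remaining quadratic-in-$P$ terms so that the bad $P^2$ is absorbed by the dissipative $-\frac1\lambda P^2$ on the left-hand side.
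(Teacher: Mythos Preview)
Your proposal is correct and follows essentially the same route as the paper: treat $H$, $P$, and $\ln J$ as satisfying pointwise-in-$y$ scalar ODEs, rewrite the right-hand sides in terms of $G$ and $\bm F$, and integrate in time using the $L^4_tL^\infty_y$ bounds from Lemmas \ref{lemtr} and \ref{lemg}. The only cosmetic difference is that the paper completes the square in the $H$- and $P$-equations (obtaining e.g.\ $H_t+\text{(squares)}=\frac{4\pi}{\lambda}G^2+\frac{2\pi}{\mu}|\bm F|^2$) whereas you absorb the cross terms via Young's inequality; the resulting bounds and the order of the three steps are identical.
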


\begin{proof}
Equation $(\ref{he})$ could be rewritten in terms of $G$ as
$$
H_t+\frac{1}{\lambda}\left(\frac{H}{2\sqrt{\pi}}+2G\sqrt{\pi}\right)^2+\frac{1}{\mu}\left|\frac{\bm{h}}{\sqrt{2\pi}}-\sqrt{2\pi}\bm{F}\right|^2
+\frac2\lambda HP=\frac{4\pi}{\lambda}G^2+\frac{2\pi}{\mu }|\bm{F}|^2,
$$
from which, one obtains
$$
H(y,t)\leq H_0(y)+\frac{4\pi}{\lambda}\int_0^t G^2(y,\tau)\mathrm{d} \tau +\frac{2\pi}{\mu }\int_0^t	|\bm{F}(y,\tau)|^2 \mathrm{d} \tau.
$$
Therefore, it follows from Lemma \ref{lemtr} and Lemma \ref{lemg} that
%\begin{equation}\label{hi1}
%\sup_{0\leq t \leq T}\|H\|_{2}\leq \|H_0\|_2 + \frac{4\pi}{\lambda}\int_0^T \|G\|_\infty\|G\|_2\mathrm{d} t +\frac{2\pi}{\mu }\int_0^T	\|\bm{F}\|_\infty\|\bm{F}\|_2 \mathrm{d} t \leq C(T),\end{equation}
%and
\begin{equation}\label{hi2}
\sup_{0\leq t \leq T}\|H\|_{\infty}\leq \|H_0\|_\infty + \frac{4\pi}{\lambda}\int_0^T \|G\|_\infty^2\mathrm{d} t +\frac{2\pi}{\mu }\int_0^T	\|\bm{F}\|_\infty^2 \mathrm{d} t \leq C.\end{equation}
By the definitions of $G$ and $\bm{F}$, one can rewrite $(\ref{mhd})_5$ as
\begin{equation}\label{pe}
 P_t+\frac{1}{ \lambda}\left( P+\frac{2-\gamma}{2}G+\frac{2-\gamma}{16\pi}H\right)^2
= \frac{\gamma^2}{4\lambda}\left(G+\frac{H}{8\pi}\right)^2+\frac{\gamma-1}{ \mu}\left|\bm{F}-\frac{\bm{h}}{4\pi}\right|^2.
\end{equation}
Integrating the above one over $(0, t)$ and by the Cauchy inequality yield
$$
P(y,t)\leq P_0(y)+C\int_0^t( G^2(y,\tau)+H^2(y,\tau)+|\bm{F}(y,\tau)|^2 +H(y,\tau))\mathrm{d} \tau
$$
and, thus, by Lemma \ref{lemtr}, Lemma \ref{lemg}, and (\ref{hi2}), one has
\begin{equation}\label{pes}
\sup_{0\leq t \leq T}\|P\|_{\infty}\leq\|P_0\|_\infty+C\int_0^T(\|G\|_\infty^2+\|H\|_\infty^2+\|\bm{F}\|_\infty^2+\|H\|_\infty)dt
\leq  C.
\end{equation}
Rewrite $(\ref{mhd})_1$ in terms of $G$ as
$J_t=\frac J\lambda\left(G+P+\frac{H}{8\pi}\right)$
from which one can solve
\begin{equation}\label{EXPJ}
J(y,t)=e^{\frac1\lambda\int_0^t\left(G(y,s)+P(y,s)+\frac{H(y,s)}{8\pi}\right)ds}.
\end{equation}
Then, it follows from
Lemma \ref{lemtr}, Lemma \ref{lemg}, (\ref{hi2}), and (\ref{pes}) that
$$\sup_{0\leq t\leq T}\|J\|_\infty\leq e^{\frac1\lambda\int_0^T\left(\|G\|_\infty+\|P\|_\infty+\frac{\|H\|_\infty}{8\pi}\right)dt}\leq C. $$
Combining this with (\ref{hi2}) as well as (\ref{pes}), the conclusion follows.
\end{proof}

Some higher order a priori estimates for $(J, \bm{h}, P)$ are stated in the next lemma.

\begin{lemma}\label{lemhy}
It holds that
\begin{equation*}\label{hpy}
 \sup_{0\leq t \leq T}\|(J_y,J_t, {\bm{h}_y},\bm{h}_t, {P_y})\|_2^2 +\int_0^T
 \Big(\|J_{yt}\|_2^2+\|\bm{h}_{yt}\|_2^2+\|P_t\|_2^4+\|P_{yt}\|_2^\frac43\Big)dt \leq C.
\end{equation*}
\end{lemma}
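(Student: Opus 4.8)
The argument rests on two observations. First, rewriting the momentum equations $(\ref{mhd})_2$ and $(\ref{mhd})_3$ gives the pointwise identities $G_y=\rho_0 u_t$ and $\bm F_y=\rho_0\bm\omega_t$; consequently the weighted space--time bounds $\int_0^T\|G_y/\sqrt{\rho_0}\|_2^2\,dt\le C$ and $\int_0^T\|\bm F_y/\sqrt{\rho_0}\|_2^2\,dt\le C$ furnished by Lemmas \ref{lemg} and \ref{lemtr} are precisely $\int_0^T(\|\sqrt{\rho_0}u_t\|_2^2+\|\sqrt{\rho_0}\bm\omega_t\|_2^2)\,dt\le C$, and hence (since $\rho_0\le\bar\rho$) also $\int_0^T(\|G_y\|_2^2+\|\bm F_y\|_2^2)\,dt\le C$. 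Second, Lemmas \ref{leme0}--\ref{lemh} together yield the $L^\infty(0,T;L^2)$ bounds $\sup_t\|(G,P,H,\bm h,\bm F,\bm\omega_y/\sqrt J)\|_2\le C$ (with $H:=|\bm h|^2$), the uniform bounds $\underline{J}\le J\le C$ and $\|(\bm h,P)\|_\infty\le C$, and $\int_0^T(\|G\|_\infty^4+\|\bm F\|_\infty^4)\,dt\le C$. Writing $\frac{u_y}{J}=\frac1\lambda(G+P+\frac H{8\pi})$ and $\frac{\bm\omega_y}{J}=\frac1\mu(\bm F-\frac{\bm h}{4\pi})$, it follows in particular that $\sup_t\|u_y\|_2\le C$ and $\|\tfrac{u_y}{J}\|_\infty+\|\tfrac{\bm\omega_y}{J}\|_\infty\le C(1+\|G\|_\infty+\|\bm F\|_\infty)\in L^4(0,T)$.

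The core of the proof is a simultaneous $L^\infty(0,T;L^2)$ estimate for $P_y$ and $\bm h_y$. The plan is to differentiate $(\ref{mhd})_5$ in $y$ and multiply by $P_y$, and to differentiate $(\ref{mhd})_4$ in $y$ (equivalently $\bm h_t=\frac1\mu(\bm F-\frac{\bm h}{4\pi})-\frac{\bm h}\lambda(G+P+\frac H{8\pi})$) and multiply by $\bm h_y$; since neither equation contains a spatial derivative of $P$ or $\bm h$, no integration by parts occurs, though strictly one should carry this out on difference quotients and pass to the limit. Substituting $(\frac{u_y}{J})_y=\frac1\lambda(\rho_0 u_t+P_y+\frac1{8\pi}H_y)$ and $(\frac{\bm\omega_y}{J})_y=\frac1\mu(\rho_0\bm\omega_t-\frac1{4\pi}\bm h_y)$ (again read off from the momentum equations), using $H_y=2\bm h\cdot\bm h_y$ and $\|\bm h\|_\infty\le C$, one bounds every resulting term by the Young inequality either by a forcing term $C(\|\sqrt{\rho_0}u_t\|_2^2+\|\sqrt{\rho_0}\bm\omega_t\|_2^2)\in L^1(0,T)$, or by $B(t)(\|P_y\|_2^2+\|\bm h_y\|_2^2)$ with $B(t)=C(1+\|G\|_\infty+\|G\|_\infty^2+\|\bm F\|_\infty+\|\bm F\|_\infty^2)\in L^2(0,T)\subset L^1(0,T)$; it is important to pair each factor $\|\tfrac{u_y}{J}\|_\infty$ or $\|\tfrac{\bm\omega_y}{J}\|_\infty$ with $\|P_y\|_2$ or $\|\bm h_y\|_2$ rather than with the forcing, while the dissipative contributions such as $-\frac1{4\pi\mu}\|\bm h_y\|_2^2$ are simply discarded. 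The terms produced by $H_y$ are what force one to run the two estimates together. Adding the two differential inequalities and invoking Gronwall's inequality (with $J_0\equiv1$, $\bm h_0\in H^1$, $P_0'\in L^2$ supplying the data) gives $\sup_{0\le t\le T}(\|P_y\|_2^2+\|\bm h_y\|_2^2)\le C$. I expect this coupled estimate --- in particular the bookkeeping that places every coefficient of $\|P_y\|_2^2+\|\bm h_y\|_2^2$ in $L^1(0,T)$, which genuinely uses the $L^4(0,T;L^\infty)$ bounds on $G$ and $\bm F$ --- to be the main obstacle.

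The remaining bounds are then mostly algebraic consequences. From $u_y=\frac J\lambda(G+P+\frac H{8\pi})$ one reads off $\sup_t\|J_t\|_2=\sup_t\|u_y\|_2\le C$, and from the formula for $\bm h_t$, $\sup_t\|\bm h_t\|_2\le C$. Differentiating $J_t=u_y$ in $y$ gives the linear ODE $(J_y)_t=u_{yy}=\frac1\lambda\big[J_y(G+P+\frac H{8\pi})+J(\rho_0 u_t+P_y+\frac1{8\pi}H_y)\big]$ with $J_0'=0$; a Gronwall argument (coefficient $\frac1\lambda\|G+P+\frac H{8\pi}\|_\infty\in L^4(0,T)$, forcing controlled by $\|\rho_0 u_t\|_2^2+\|P_y\|_2^2+\|\bm h_y\|_2^2\in L^1(0,T)$) yields $\sup_t\|J_y\|_2\le C$, and the same expression for $u_{yy}$ then gives $\int_0^T\|J_{yt}\|_2^2\,dt=\int_0^T\|u_{yy}\|_2^2\,dt\le C$. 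Substituting the bounds into the $y$-differentiated $(\ref{mhd})_4$ gives $\int_0^T\|\bm h_{yt}\|_2^2\,dt\le C$. For the pressure, $(\ref{mhd})_5$ yields $\|P_t\|_2\le C\big(1+\|\tfrac{u_y}{J}\|_4^2+\|\tfrac{\bm\omega_y}{J}\|_4^2\big)\le C(1+\|G\|_\infty+\|\bm F\|_\infty)$ after interpolating the $L^2$ and $L^\infty$ bounds on $\frac{u_y}{J},\frac{\bm\omega_y}{J}$, whence $\int_0^T\|P_t\|_2^4\,dt\le C$; and differentiating $(\ref{mhd})_5$ in $y$, each term of $P_{yt}$ is a product of a factor of size $\le C(1+\|G\|_\infty+\|\bm F\|_\infty)\in L^4(0,T)$ with one among $\|\rho_0 u_t\|_2,\|\rho_0\bm\omega_t\|_2,\|P_y\|_2,\|\bm h_y\|_2\in L^2(0,T)$, so H\"older in time with exponents $3$ and $\tfrac32$ gives $\int_0^T\|P_{yt}\|_2^{4/3}\,dt\le C$. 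Collecting these bounds proves the lemma.
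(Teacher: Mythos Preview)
Your proposal is correct and follows essentially the same route as the paper: a coupled Gronwall estimate for $\|\bm h_y\|_2^2+\|P_y\|_2^2$ driven by $\|\bm F_y\|_2^2+\|G_y\|_2^2\in L^1(0,T)$ (which you write equivalently as $\|\sqrt{\rho_0}\bm\omega_t\|_2^2+\|\sqrt{\rho_0}u_t\|_2^2$ via $\bm F_y=\rho_0\bm\omega_t$, $G_y=\rho_0 u_t$) with Gronwall coefficient controlled by $\|(\bm F,G)\|_\infty^2\in L^2(0,T)$, followed by the same algebraic bookkeeping for the remaining quantities. The only cosmetic difference is that the paper obtains $\|J_y\|_2$ from the explicit exponential formula $J=\exp\big(\tfrac1\lambda\int_0^t(G+P+\tfrac{H}{8\pi})\,ds\big)$, whereas you reach it via the linear ODE for $J_y$ plus Gronwall; both are equally valid.
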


\begin{proof}
By the definitions of $G, H,$ and $F$, one can rewrite $(\ref{mhd})_4$ as
\begin{equation}\label{EQh}
\bm{h}_t=\frac1\mu\left( \bm{F} -\frac{\bm{h}}{4\pi}\right) -\frac{\bm{h}}{\lambda}\left(G+P+\frac{H}{8\pi}\right).	
\end{equation}
Differentiating the above with respect to $y$ yields
\begin{equation}\label{hye}
\partial_t \bm{h}_y+\frac{\bm{h}_y}{4\pi\mu }+\frac{P\bm{h}_y}{\lambda}+\frac{\bm{h}H_y}{8\pi\lambda}+\frac{H\bm{h}_y}{8\pi\lambda}=\frac{\bm{F}_y}{ \mu}-\frac{1}{\lambda}\Big(\bm{h}_yG+\bm{h}G_y+\bm{h}P_y\Big).
\end{equation}
Taking the inner product to the above equation with $\bm{h}_y$, it follows from the H\"older and Cauchy inequalities and Lemma \ref{lemh} that
\begin{equation}\label{hyes}
\begin{aligned}
\frac{1}{2}\frac{d}{dt}\|{\bm{h}_y}\|_{2}^2&+\frac{1}{4\pi\mu }\|\bm{h}_y\|_{2}^2
+\frac{1}{\lambda }\|\sqrt{P}\bm{h}_y\|_{2}^2
+\frac{1}{16\pi\lambda }\|{H}_y\|_{2}^2
+\frac{1}{8\pi\lambda }\|\sqrt{H}\bm{h}_y\|_{2}^2\\
&=\frac1\mu\int_{}\bm{F}_y\cdot\bm{h}_y\mathrm{d}y
-\frac{1}{\lambda}\int_{} \bm{h}_y \cdot\Big(\bm{h}_yG+\bm{h}G_y+\bm{h}P_y\Big)\mathrm{d}y	\\
&\leq C\left[\|\bm{F}_y\|_2\|\bm{h}_y\|_2+\|G\|_\infty\|\bm{h}_y\|_2^2+\|\bm{h}\|_\infty\|\bm{h}_y\|_2(\|G_y\|_2+\|P_y\|_2)\right]\\
&\leq C\left[\|\bm{F}_y\|_{2}^2+\|{G}_y\|_{2}^2+(1+\|G\|_{\infty}^2)\|\bm{h}_y\|_2^2+\|P_y\|_2^2\right].
\end{aligned}	
\end{equation}
Differentiating  equation $(\ref{pe})$ with respect to $y$ yields
\begin{equation}\label{pye}
\begin{aligned}
&\partial_t P_y+\frac{2}{ \lambda}\left( P+\frac{2-\gamma}{2}G+\frac{2-\gamma}{16\pi}H\right)\left( P_y+\frac{2-\gamma}{2}G_y+\frac{2-\gamma}{16\pi}H_y\right)\\
=&\frac{\gamma^2}{2\lambda}\left(G+\frac{H}{8\pi}\right)\left(G_y+\frac{H_y}{8\pi}\right)+\frac{2(\gamma-1)}{ \mu}\left(\bm{F}-\frac{\bm{h}}{4\pi}\right)\cdot \left(\bm{F}_y-\frac{\bm{h}_y}{4\pi}\right).
\end{aligned}
\end{equation}
Multiplying the above equation by $P_y$ and integrating over $\mathbb {R}$, it follows from the H\"older and Cauchy inequalities and Lemma \ref{lemh} that
\begin{equation}\label{py}
\begin{aligned}
 \frac{1}{2}\frac{d}{dt}\|{P_y}\|_{2}^2+\frac{2}{\lambda}\|{\sqrt{ P} P_y}\|_{2}^2
\leq&  C\|(\bm{F},G,H,\bm{h},P)\|_\infty\|({\bm{F}_y},G_y,{{H}_y},\bm{h}_y,{P_y})\|_2\|{P_y}\|_{2}\\
\leq&C \|({\bm{F}_y},{{G}_y})\|_{2}^2+(1+\|(\bm{F},G)\|_{\infty}^2)
\|({P_y},\bm{h}_y)\|_2^2.
\end{aligned}	
\end{equation}
Summing (\ref{hyes}) and (\ref{py}) and by Lemmas \ref{lemtr}--\ref{lemh}, one obtains by the Gronwall inequality
that
\begin{equation}\label{hyef}
\sup_{0\leq t \leq T}\Big(\|{\bm{h}_y}\|_{2}^2+\|{P_y}\|_{2}^2\Big)\leq C.
\end{equation}
By Lemmas \ref{leme0}, \ref{lj}, \ref{lemtr}, and \ref{lemh}, and using (\ref{hyef}), one can easily get
\begin{equation}
  \sup_{0\leq t\leq T}\Big(\|(H,\bm{h},P)\|_{H^1}+\|(\bm{F},G)\|_2\Big)
  +\int_0^T\Big(\|(\bm{F},G)\|_\infty^4+\|(\bm{F}_y,G_y)\|_2^2\Big)dt \leq C. \label{L2hPF}
\end{equation}
By (\ref{EQh}) and (\ref{hye}), it holds that
\begin{align*}
  &\sup_{0\leq t\leq T}\|\bm{h}_t\|_2^2+\int_0^T\|\bm{h}_{yt}\|_2^2dt\\
  \leq&C\int_0^T\Big(\|\bm{h}_y\|_2^2+\|(G,H,P)\|_\infty^2\|\bm{h}_y\|_2^2+\|\bm{F}_y\|_2^2
  +\|\bm{h}\|_\infty^2\|(G_y,P_y)\|_2^2\Big)dt\\
  &+C\sup_{0\leq t\leq T}\Big[\|\bm{F}\|_2^2+\|\bm{h}\|_2^2+\|\bm{h}\|_\infty^2\Big(\|G\|_2^2+\|H\|_2^2+\|P\|_2^2\Big)\Big],
\end{align*}
from which, by (\ref{L2hPF}) and Lemma \ref{lemh}, one gets
\begin{equation}
   \sup_{0\leq t\leq T}\|\bm{h}_t\|_2^2+\int_0^T\|\bm{h}_{yt}\|_2^2dt\leq C. \label{L2ht}
\end{equation}
By (\ref{pe}) and (\ref{pye}), it holds that
\begin{align*}
  &\int_0^T\Big(\|P_t\|_2^4+\|P_{yt}\|_2^\frac43\Big)dt\\
  \leq&C\int_0^T \|(\bm{F},G,H,\bm{h},P)\|_\infty^4\|(\bm{F},G,H,\bm{h},P)\|_2^4dt\\
  &+C\int_0^T\|(\bm{F},G,H,\bm{h},P)\|_\infty^{\frac43}\|(\bm{F}_y,G_y,H_y,
\bm{h}_y,P_y)\|_2^\frac43 dt \\
  \leq&C\left(\int_0^T\|(\bm{F},G,H,\bm{h},P)\|_\infty^4dt\right)^{\frac13}\left(\int_0^T\|(\bm{F}_y,G_y,H_y,
\bm{h}_y,P_y)\|_2^2dt\right)^\frac23\\
  &+C\int_0^T\|(\bm{F},G,H,\bm{h},P)\|_\infty^4\|(\bm{F},G,H,\bm{h},P)\|_2^4dt
\end{align*}
from which, by (\ref{L2hPF}) and Lemma \ref{lemh}, one gets
\begin{equation}
   \int_0^T\Big(\|P_t\|_2^4+\|P_{yt}\|_2^\frac43\Big)dt\leq C. \label{L42Pt}
\end{equation}
By direct calculations, it follows from (\ref{EXPJ}) that
\begin{align*}
  &J_t=\frac J\lambda\left(P+G+\frac{H}{8\pi}\right),\quad J_y=\frac{J}{\lambda}\int_0^t\left(P_y+G_y+\frac{H_y}{8\pi}\right)ds,\\
  &J_{yt}=\frac{J_y}{\lambda}\left(P+G+\frac{H}{8\pi}\right)+\frac J\lambda\left(P_y+G_y+\frac{H_y}{8\pi}\right).
\end{align*}
Thanks to these, it follows from (\ref{L2hPF}) and Lemma \ref{lemh} that
\begin{equation}\label{Jt}
    \sup_{0\leq t\leq T}\|(J_t,J_y)\|_2
    \leq\sup_{0\leq t\leq T}\left(\|J\|_\infty\|(G,H,P)\|_2+\|J\|_\infty\int_0^T\|(G_y,H_y,P_y)\|_2dt\right)
    \leq C
\end{equation}
and further that
\begin{equation}
  \label{JYT}
  \int_0^T\|J_{yt}\|_2^2dt\leq C\int_0^T\Big(\|J_y\|_2^2\|(G,H,P)\|_\infty^2+\|J\|_\infty^2\|(G_y,H_y,P_y)\|_2^2\Big)dt
  \leq C.
\end{equation}
Combining (\ref{hyef}) with (\ref{L2ht})--(\ref{JYT}) leads to the conclusion.
\end{proof}

Finally, we have the a priori estimates for the velocity field $(u,\bm{\omega})$.

\begin{lemma}\label{lemj}
It holds that
$$
\sup_{0\leq t \leq T} \Big(\|u_y\|_2^2+\|\bm{\omega}_y\|_2^2\Big)+\int_{0}^{T}\|({u_{yy}}, \sqrt{\rho_0} u_t, \sqrt{\rho_0} \bm{\omega}_t, {\bm{\omega}_{yy}})\|_{2}^2\mathrm{d}t \leq C.
$$
\end{lemma}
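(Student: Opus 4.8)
The plan is to reduce every quantity in the statement to objects already controlled in Lemmas \ref{leme0}--\ref{lemhy} by means of two exact algebraic identities. Since $P_y+\frac{1}{4\pi}\bm h\cdot\bm h_y=\left(P+\frac{H}{8\pi}\right)_y$ and, by the definition of $G$, $G_y=\lambda\left(\frac{u_y}{J}\right)_y-\left(P+\frac{H}{8\pi}\right)_y$, equation $(\ref{mhd})_2$ is nothing but $\rho_0 u_t=G_y$; likewise, by the definition of $\bm F$, equation $(\ref{mhd})_3$ reads $\rho_0\bm\omega_t=\bm F_y$. Combined with the pointwise representations $u_y=\frac{J}{\lambda}\left(G+P+\frac{H}{8\pi}\right)$ and $\bm\omega_y=\frac{J}{\mu}\left(\bm F-\frac{\bm h}{4\pi}\right)$, also immediate from the definitions of $G$ and $\bm F$, these identities express $u_t,\bm\omega_t,u_y,\bm\omega_y$ and, after one $y$-differentiation, $u_{yy},\bm\omega_{yy}$ through $G,\bm F,\bm h,P,J$ and their first-order derivatives, all of which are under control from the previous lemmas.

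The first step is the space-time integrability of $\sqrt{\rho_0}u_t$ and $\sqrt{\rho_0}\bm\omega_t$. Since the PDE $\rho_0 u_t=G_y$ holds a.e., one has $G_y=0$ a.e.\ on $\{\rho_0=0\}$, hence $\|\sqrt{\rho_0}u_t\|_2=\big\|G_y/\sqrt{\rho_0}\big\|_2$ and therefore $\int_0^T\|\sqrt{\rho_0}u_t\|_2^2\,dt\le C$ by Lemma \ref{lemg}; the same reasoning with $\rho_0\bm\omega_t=\bm F_y$ and Lemma \ref{lemtr} gives $\int_0^T\|\sqrt{\rho_0}\bm\omega_t\|_2^2\,dt\le C$. (A more traditional alternative is to test $(\ref{mhd})_2$ with $u_t$ and $(\ref{mhd})_3$ with $\bm\omega_t$: using $J_t=u_y$ the dissipation produces $\frac{\lambda}{2}\frac{d}{dt}\|u_y/\sqrt J\|_2^2+\frac{1}{2\lambda^2}\int J\left(G+P+\frac{H}{8\pi}\right)^3\,dy$ and an analogous term for $\bm\omega$, the cubic integrals being bounded by $C(1+\|G\|_\infty)\in L^1_t$ via Lemmas \ref{leme0}, \ref{lemg}, \ref{lemh} and Corollary \ref{cor3.1}; the terms $\int\left(P+\frac{H}{8\pi}\right)_yu_t\,dy$ and $\int\bm h_y\cdot\bm\omega_t\,dy$ are integrated by parts in $y$ and then in $t$, the total $t$-derivatives absorbed into the left-hand side and the remainders controlled by $\|P_t\|_2,\|H_t\|_2,\|\bm h_t\|_2$ from Lemma \ref{lemhy}; Gronwall then closes the estimate and also yields the $\sup_t$ bounds.)

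The supremum bounds on $\|u_y\|_2$ and $\|\bm\omega_y\|_2$ are in fact immediate from the representations: $\|u_y\|_2\le\frac{\|J\|_\infty}{\lambda}\left(\|G\|_2+\|P\|_2+\frac{1}{8\pi}\|H\|_2\right)\le C$, where $\|G\|_2\le\underline J^{-1/2}\|\sqrt J G\|_2$ (Lemma \ref{lemg}), $\|P\|_2^2\le\|P\|_\infty\|P\|_1$ with $\|P\|_1\le\underline J^{-1}\int JP\,dy$ (Lemmas \ref{leme0}, \ref{lj}, \ref{lemh}), and $\|H\|_2=\|\bm h\|_4^2$ (Lemma \ref{lemg}); similarly $\|\bm\omega_y\|_2\le\frac{\|J\|_\infty}{\mu}\left(\|\bm F\|_2+\frac{1}{4\pi}\|\bm h\|_2\right)\le C$ by Lemmas \ref{lemtr} and \ref{lemh}. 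For the second derivatives, differentiating $u_y=\frac{J}{\lambda}\left(G+P+\frac{H}{8\pi}\right)$ gives $u_{yy}=\frac{J_y}{\lambda}\left(G+P+\frac{H}{8\pi}\right)+\frac{J}{\lambda}\left(G_y+P_y+\frac{H_y}{8\pi}\right)$, so $\|u_{yy}\|_2\le C\big[(1+\|G\|_\infty)\|J_y\|_2+\|G_y\|_2+\|P_y\|_2+\|\bm h\|_\infty\|\bm h_y\|_2\big]$; since $\int_0^T\|G_y\|_2^2\,dt\le\bar\rho\int_0^T\|G_y/\sqrt{\rho_0}\|_2^2\,dt\le C$, $\|(J_y,P_y,\bm h_y)\|_2$ and $\|\bm h\|_\infty$ are uniformly bounded (Lemmas \ref{lemh}, \ref{lemhy}) and $\|G\|_\infty\in L^4_t$ (Lemma \ref{lemg}), integrating in $t$ yields $\int_0^T\|u_{yy}\|_2^2\,dt\le C$; the estimate for $\int_0^T\|\bm\omega_{yy}\|_2^2\,dt$ is identical, starting from $\bm\omega_y=\frac{J}{\mu}\left(\bm F-\frac{\bm h}{4\pi}\right)$ and using $\|\bm F\|_\infty\in L^4_t$ (Lemma \ref{lemtr}) in place of $\|G\|_\infty$.

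I expect no essential obstacle here: once the identities $\rho_0 u_t=G_y$ and $\rho_0\bm\omega_t=\bm F_y$ are noticed, the lemma is a matter of collecting the previous estimates. The only delicate point is the vacuum, which forbids dividing by $\rho_0$; this is handled by reading $\|\sqrt{\rho_0}u_t\|_2=\|G_y/\sqrt{\rho_0}\|_2$ directly off the PDE (legitimate because $G_y$ vanishes a.e.\ on $\{\rho_0=0\}$), or, in the testing variant, by integrating by parts so that $P_y$ and $\bm h\cdot\bm h_y$ never get paired against $u_t$. The remaining care is purely in the bookkeeping of time integrability: $\|G\|_\infty$, $\|\bm F\|_\infty$ and $\|P_t\|_2$ are only in $L^4(0,T)$ (Lemmas \ref{lemtr}, \ref{lemg}, \ref{lemhy}), so each must be paired with an $L^\infty(0,T)$ norm (or another $L^4$ one) to keep every product integrable up to $T$.
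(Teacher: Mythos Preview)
Your proposal is correct and follows essentially the same route as the paper: both proofs rest on the identities $\rho_0 u_t=G_y$, $\rho_0\bm\omega_t=\bm F_y$ and the pointwise representations $u_y=\frac{J}{\lambda}\left(G+P+\frac{H}{8\pi}\right)$, $\bm\omega_y=\frac{J}{\mu}\left(\bm F-\frac{\bm h}{4\pi}\right)$, then read off every bound from Lemmas \ref{leme0}--\ref{lemhy}. Your parenthetical ``testing'' alternative and your explicit remark that $G_y=0$ a.e.\ on $\{\rho_0=0\}$ are extra commentary, but the core argument matches the paper's.
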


\begin{proof}
Recalling the definition of $G$, and noticing that $\rho_0 u_t = G_y$, it follows
\begin{equation*}
\begin{aligned}
&\sup_{0\leq t \leq T} \|u_y\|_2^2+\int_{0}^{ T} \|\sqrt{\rho_0} u_t\|_2^2\mathrm{d}t\\
=&\frac1\lambda\sup_{0\leq t \leq T} \left\|J\left(G+P+\frac{H}{8\pi}\right)\right\|^2_2+\int_{0}^{ T} \left\|\frac{{G}_y}{\sqrt{\rho_0}}\right\|_{2}^2\mathrm{d}t\\
\leq&C\sup_{0\leq t \leq T} \|J\|_\infty\|(G,H,P)\|_2^2+\int_{0}^{ T} \left\|\frac{{G}_y}{\sqrt{\rho_0}}\right\|_{2}^2\mathrm{d}t
\end{aligned}
\end{equation*}
from which, by (\ref{L2hPF}), Lemma \ref{lemg}, and Lemma \ref{lemh}, one obtains
\begin{equation}
  \sup_{0\leq t \leq T} \|u_y\|_2^2+\int_{0}^{ T} \|\sqrt{\rho_0} u_t\|_2^2\mathrm{d}t\leq C.
  \label{uy}
\end{equation}
Noticing that
$$
	u_{yy}=
	\frac{J_y}{\lambda}\left(G+P+\frac{H}{8\pi}\right)+\frac{J}{\lambda}\left(G_y+P_y+\frac{H_y}{8\pi}\right),
$$
it follows from (\ref{L2hPF}), Lemma \ref{lemh}, and Lemma \ref{lemhy} that
\begin{equation}\label{uyy}
\begin{aligned}
& \int_{0}^{T}\|{u_{yy}}\|_{2}^2\mathrm{d}t \leq C \int_{0}^{T}\Big(\|(P,G,H)\|^2_\infty \|J_y\|_{2}^2+\|J\|_\infty^2\|(P_y, G_y,H_y)\|_{2}^2\Big)\mathrm{d}t \leq C.
\end{aligned}
\end{equation}
Noticing that
$$
	\rho_0 \bm{\omega}_t=\bm{F}_y,\quad\bm{\omega}_y=\frac{J}{\mu}\left(\bm{F}-\frac{\bm{h}}{4\pi}\right),\quad
	\bm{\omega}_{yy}=
	\frac{J_y}{\mu }\Big(\bm{F}-\frac{\bm{h}}{4\pi}\Big)+\frac{J}{\lambda}\left(\bm{F}_y-\frac{\bm{h}_y}{4\pi}\right),
$$
it holds that
\begin{align*}
&\sup_{0\leq t\leq T}\|\bm{\omega}_y\|_2^2+\int_{0}^{T} \|(\sqrt{\rho_0} \bm{\omega}_t,~~{\bm{\omega}_{yy}}) \|_{2}^2\mathrm{d}t\\
\leq&C\int_0^T\left(\|(\bm{F},\bm{h})\|_\infty^2\|J_y\|_2^2
+\|J\|_\infty^2\|(\bm{F}_y,\bm{h}_y)\|_2^2+\left\|\frac{\bm{F}_y}{\sqrt{\rho_0}}\right\|_2^2\right)dt\\
&+C\sup_{0\leq t\leq T}\|J\|_\infty^2\|(\bm{F},\bm{h})\|_2^2,
\end{align*}
from which, by (\ref{L2hPF}), Lemma \ref{lemtr}, and Lemma \ref{lemh} one obtains that
\begin{equation}
  \sup_{0\leq t\leq T}\|\bm{\omega}_y\|_2^2+\int_{0}^{T} \|(\sqrt{\rho_0} \bm{\omega}_t,~~{\bm{\omega}_{yy}}) \|_{2}^2\mathrm{d}t\leq C. \label{wyy}
\end{equation}
Combining (\ref{uy}) with (\ref{uyy}) as well as (\ref{wyy}) leads to the conclusion. 	
\end{proof}

\section{Proof of Theorem $\ref{thm}$ }

Theorem \ref{thm} is proved as follows.

\begin{proof}[Proof of Theorem $\ref{thm}$]
By Lemma \ref{lem1}, there is a unique local strong solution, denoted by $(J, u, \bm{\omega}, \bm{h}, P)$, to system (\ref{mhd}) subject to (\ref{ini}). Besides, by iteratively applying Lemma \ref{lem1}, one can extend this solution uniquely
to the maximal time of existence $T_\text{max}$. We claim that $T_\text{max}=\infty$ and, as a result, the extended $(J, u, \bm{\omega}, \bm{h}, P)$ is a global strong solution to system (\ref{mhd}) subject to (\ref{ini}), proving the conclusion.
Assume by contradiction that $T_\text{max}<\infty$. Then, by the local well-posedness result in Lemma \ref{lem1}, it must have
\begin{equation}
  \label{TM}
  \varlimsup_{T\rightarrow T_\text{max}^-}
  \sup_{0\leq t\leq T}\left(\left(\inf_{y\in\mathbb R} J\right)^{-1}+\|(J_y,\sqrt{\rho_0}u, \sqrt{\rho_0}\bm{\omega}, u_y, \bm{\omega}_y)\|_2+\|(\bm{h},P)\|_{H^1}\right) =\infty.
\end{equation}
By Lemma \ref{lj}, it holds that $J(y,t)\geq\underline J$ for any $(y,t)\in\mathbb R\times(0,T_\text{max})$ and, thus,
\begin{equation}\label{FN1}
  \sup_{0\leq t<T_\text{max}}\left(\inf_{y\in\mathbb R} J\right)^{-1}\leq\frac{1}{\underline J}.
\end{equation}
It follows from Lemmas \ref{leme0}, \ref{lemhy}, and \ref{lemj} that
\begin{equation}\label{FN2}
  \sup_{0\leq t\leq T}\Big(\|(J_y,\sqrt{\rho_0}u, \sqrt{\rho_0}\bm{\omega}, u_y, \bm{\omega}_y)\|_2+\|(\bm{h},P)\|_{H^1}
  \Big)\leq C,
\end{equation}
for any $T\in(0,T_\text{max})$, where $C$ is a positive constant depending only on the initial data and $T_\text{max}$.
Thanks to (\ref{FN1}) and (\ref{FN2}), one gets
$$
\sup_{0\leq t<T_\text{max}}\left(\left(\inf_{y\in\mathbb R} J\right)^{-1}+\|(J_y,\sqrt{\rho_0}u, \sqrt{\rho_0}\bm{\omega}, u_y, \bm{\omega}_y)\|_2+\|(\bm{h},P)\|_{H^1}\right)<\infty,
$$
contradicting to (\ref{TM}). This contradiction implies $T_\text{max}=\infty$, proving Theorem \ref{thm}.
\end{proof}

\smallskip
{\bf Acknowledgment.}
The work of Jinkai Li
was supported in part by the National Natural Science Foundation of China (11971009, 11871005, and 11771156) and
by the Guangdong Basic and Applied Basic Research Foundation (2019A1515011621, 2020B1515310005, 2020B1515310002, and
2021A1515010247). The research of Mingjie Li is supported by the NSFC Grant No. 11671412 and 11971307.

\bigskip

\end{document}